\newcommand\blfootnote[1]{%
  \begingroup
  \renewcommand\thefootnote{}\footnote{#1}%
  \addtocounter{footnote}{-1}%
  \endgroup
}
\theoremstyle{plain}
\newtheorem{theorem}{Theorem}[section]
\newtheorem{proposition}[theorem]{Proposition}
\newtheorem{lemma}[theorem]{Lemma}
\newtheorem*{theorem*}{Theorem}
\newtheorem*{lemma*}{Lemma}
\theoremstyle{remark}
\newtheorem{remark}[theorem]{Remark}
\theoremstyle{definition}
\newtheorem{definition}{Definition}[section]
\newcommand*\dd{\mathop{}\!\mathrm{d}}
\DeclareMathOperator{\tr}{tr}
\DeclareMathOperator{\Tr}{Tr}
\DeclareMathOperator{\id}{id}
\DeclareMathOperator{\Ind}{Ind}
\DeclareMathOperator{\HS}{HS}
\DeclareMathOperator{\GL}{GL}
\DeclareMathOperator{\PSL}{PSL}
\DeclarePairedDelimiter\abs{\lvert}{\rvert}
\title{The Atiyah-Schmid formula for reductive groups}
\author{Jun Yang}
\date{}
\begin{document}
\maketitle
\begin{abstract} 
We give the generalized Atiyah-Schmid formula for projective tempered representations. 
Then we prove the Atiyah-Schmid formula for arithmetic subgroups of real reductive groups. 
\end{abstract}

\blfootnote{\Letter~ \href{mailto:junyang@fas.harvard.edu}{junyang@fas.harvard.edu}~~~~~~~Harvard University, Cambridge, MA 02138, USA}
\blfootnote{This work was supported in part by the ARO Grant W911NF-19-1-0302 and the ARO MURI Grant W911NF-20-1-0082.}

\tableofcontents

\section{Introduction}

In the study of discrete series representations of semisimple Lie groups, Atiyah and Schmid proposed a formula connecting the formal degrees of discrete series representations and the dimensions over a discrete subgroup (see \cite[formula 3.3]{AS77} and \cite[Theorem 3.3.2]{GHJ}). 
This article is devoted to large generalizations of such a formula, which work for projective tempered representations and real reductive groups. 

Let $G$ be a semisimple Lie group with a Haar measure $\mu$. 
Let $(\pi,H)$ be a discrete series representation of $G$, which is an irreducible representation whose matrix coefficients belong to $L^2(G,\mu)$. 
Let $d(\pi)$ be its formal degree.  
For a lattice $\Gamma$ of $G$, i.e., a discrete subgroup $\Gamma$ of $G$ such that $\mu(\Gamma\backslash G)$ is finite,  
we let $\mathcal{L}(\Gamma)$ be the left group von Neumann algebra of $\Gamma$ and $\dim_{\mathcal{L}(\Gamma)}H$ be the dimension of $H$ over this algebra. 
Then the Atiyah-Schmid formula is given as
\begin{equation}\label{eAS}
    \dim_{\mathcal{L}(\Gamma)}H=\mu(\Gamma\backslash G)\cdot d(\pi).
\end{equation}
For example, if $G=\PSL(2,\mathbb{R})$, $\Gamma=\PSL(2,\mathbb{Z})$ and $(\pi,H)$ is the holomorphic discrete series representation of the lowest weight, we have $\dim_{\mathcal{L}(\Gamma)}H=\frac{1}{12}$.

We expect an analog of \ref{eAS} for reductive groups such as $\GL(n,\mathbb{R})$. 
But there are some obstacles for such generalization from semisimple groups:
\begin{enumerate}
    \item Reductive groups have no square-integrable irreducible representations but only such representations modulo the center. 
    For instance, if $(\pi,H_{\pi})$ is a subrepresentation of the left regular representation of $G$ on $L^2(G)$, we have
    \begin{center}
        $\langle u,v\rangle_{H_{\pi}}=\int_{G/Z(G)}u(\dot{g})\overline{v(\dot{g})}\left(\int_{Z(G)}1dz\right)dg$ for $u,v\in H_{\pi}$. 
    \end{center}
    This integral diverges since the center $Z(G)$ usually contains a torus and the inner integral is infinite. 
    While $\pi$ can be regarded as a representation $G/Z(G)$, it is actually projective, and formal degrees or Plancherel measures are only related to $G/Z(G)$ instead of to $G$. 

    \item For a real reductive group $G(\mathbb{R})$, the most interesting discrete subgroups are arithmetic groups such as $G(\mathbb{Z})$, all of which fail to be lattices in general. 
    For example, for the group $G=\GL(n)$, the volume of the quotient space $\mu(\GL(\mathbb{Z})\backslash\GL(\mathbb{R}))$ is infinite. 
\end{enumerate}
Both these two problems are solved in this article  by introducing the notion of the {\it $\Gamma$-density} $d_{\Gamma}(H)$ over a discrete group $\Gamma$ for a Hilbert space $H$, which is the analogue of the $\Gamma$-dimension in \cite[\S 1]{Aty76}. 

For a 2-cocycle $\omega$ of $\Gamma$, we  consider the $\omega$-projective representations, which are continuous maps $\pi\colon G\to U(H)$ such that $\pi(g)\pi(h)=\omega(g,h)\pi(gh)$ for all $g,h\in G$.  
We can further define the $(\Gamma,\omega)$-density $d_{\Gamma,\omega}(H)$ for these representations. 
A formula for such representations is first obtained. 
\begin{lemma*}
Let $\Gamma$ be a lattice in a unimodular type I locally compact group $G$. 
Let $\nu_{G,\omega}$ be the Plancherel measure on the $\omega$-projective dual $\Pi(G,\omega)$ of $G$ for a 2-cocycle $\omega$. 
We have
\begin{center}
$d_{\Gamma,\omega}(H_{\pi})=\mu(\Gamma/G)\cdot d\nu_{G,\omega}(\pi)$. 
\end{center}
\end{lemma*}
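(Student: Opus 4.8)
The plan is to reduce the identity to the computation of a single von Neumann trace, exactly as in Atiyah's treatment of the $\Gamma$-dimension, and then to evaluate that trace through the twisted Plancherel decomposition of $L^2(G)$. I would work throughout on $L^2(G)$, which carries two commuting projective representations: the left $\omega$-regular representation $\lambda_\omega$ of $G$ and the corresponding right regular representation. Restricting $\lambda_\omega$ to $\Gamma$ and fixing a Borel fundamental domain $F$ for $\Gamma\backslash G$, I would first record the $\mathcal{L}_\omega(\Gamma)$-module isomorphism $L^2(G)\cong\ell^2(\Gamma)\otimes L^2(F)$ under which the left $\omega$-translations by $\Gamma$ become $\mathcal{L}_\omega(\Gamma)\otimes 1$. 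Its commutant is then $\mathcal{R}_\omega(\Gamma)\,\bar\otimes\,B(L^2(F))$, carrying the semifinite trace $\Tr=\tau\otimes\Tr_{B(L^2(F))}$, where $\tau$ is the canonical trace on the twisted group von Neumann algebra.

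Next I would invoke the twisted Plancherel theorem, valid because $G$ is unimodular and type I, to decompose $L^2(G)\cong\int^\oplus_{\Pi(G,\omega)}H_\pi\otimes\overline{H_\pi}\,d\nu_{G,\omega}(\pi)$ as a bimodule for the two commuting regular representations. For each $\pi$ let $P_\pi$ denote the projection cutting out its contribution. Since $P_\pi$ commutes with all of $\lambda_\omega(G)$, it in particular lies in the commutant of $\lambda_\omega(\Gamma)$, so by the definition of the $\Gamma$-density as the trace of this projection one has $d_{\Gamma,\omega}(H_\pi)=\Tr(P_\pi)$. The heart of the argument is the evaluation of $\Tr(P_\pi)$.

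To carry this out, I would use that an operator commuting with $\lambda_\omega(G)$ acts by $\omega$-twisted right convolution with a kernel $K_\pi$ that, up to the cocycle, depends only on $g^{-1}g'$; its diagonal value $K_\pi(g,g)$ is therefore independent of $g$. The trace over the commutant localizes to the fundamental domain --- this is the projective analogue of Atiyah's formula $\Tr_\Gamma(T)=\int_F K(g,g)\,d\mu(g)$ --- giving $\Tr(P_\pi)=K_\pi(g,g)\cdot\mu(\Gamma\backslash G)$. Finally, the twisted Plancherel theorem identifies the constant $K_\pi(g,g)$, i.e.\ the value at the identity of the reproducing kernel of the $\pi$-component, with the Plancherel density $d\nu_{G,\omega}(\pi)$; in the atomic (discrete-series) case this is precisely the classical statement that the reproducing kernel is evaluated by the formal degree. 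Combining the two displays yields $d_{\Gamma,\omega}(H_\pi)=\mu(\Gamma\backslash G)\cdot d\nu_{G,\omega}(\pi)$.

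The main obstacle I anticipate is the continuous spectrum. When $\pi$ is not square-integrable, $H_\pi$ does not embed in $L^2(G)$ and $P_\pi$ is not a genuine projection but a slice of the direct integral, so both $d_{\Gamma,\omega}(H_\pi)$ and the diagonal kernel must be interpreted as densities with respect to $\nu_{G,\omega}$. Making this precise requires disintegrating the commutant trace $\Tr$ over the Plancherel base and showing that this disintegration is compatible with the central decomposition of $\mathcal{L}_\omega(\Gamma)$, so that the equality $\Tr(P_\pi)=K_\pi(g,g)\cdot\mu(\Gamma\backslash G)$ holds $\nu_{G,\omega}$-almost everywhere as an identity of densities. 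The projective cocycle adds a layer of bookkeeping --- twisted convolution, the twisted trace $\tau$, and the comparison of the left and right cocycles --- but does not change the structure of the argument.
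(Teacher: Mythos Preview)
Your strategy is sound and parallels the paper through the first stage: both localize the commutant trace to a fundamental domain, and the paper's Lemma~\ref{ldimsumnorm} is precisely your Atiyah formula, written as $\dim_{\mathcal{L}(\Gamma,\omega)} H_X=\sum_k\|P_Xd_k\|^2$ for an orthonormal basis $\{d_k\}$ of $L^2(D)$. The routes diverge in evaluating this trace. You recognize $P_X$ as twisted right convolution and read off the constant diagonal of its kernel by Fourier inversion. The paper instead chooses a unit vector $\eta\in H_X$ with $\|\eta(\pi)\|^2=\nu_{G,\omega}(X)^{-1}$ a.e., writes $\mu(D)=\int_D\|\lambda_\omega(g)\eta\|^2\,d\mu(g)$, expands in the basis $\{\lambda_\omega(\gamma)^{-1}d_k\}$, and identifies the result as $\sum_k\|C_{Pd_k,\eta}\|_{L^2}^2$; the matrix-coefficient Plancherel identity of Lemma~\ref{lmxcoef} then evaluates this as $\nu_{G,\omega}(X)^{-1}\sum_k\|Pd_k\|^2$. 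The paper's detour is longer but never leaves $L^2$ and so never asks whether the convolution kernel has a pointwise diagonal; your route is more direct but needs that regularity input.

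Two slips to fix. First, the paper \emph{defines} the density via the integrated identity $\dim_{\mathcal{L}(\Gamma,\omega)}H_X=\mu(\Gamma\backslash G)\,\nu_{G,\omega}(X)$, so you should work with the honest projection $P_X$ for finite-measure $X$ rather than attempt a pointwise disintegration at $\pi$; this dissolves your continuous-spectrum obstacle entirely. Second, ``the projection cutting out the $\pi$-contribution'' in $\int H_\pi\otimes\overline{H_\pi}\,d\nu_{G,\omega}$ has $\Gamma$-trace density $\dim(H_\pi)\,d\nu_{G,\omega}(\pi)$, which is typically infinite. You need instead the projection onto a single embedded copy $\int_X H_\pi\otimes e_1(\pi)^*\,d\nu_{G,\omega}$ for a measurable unit section $e_1$; this still commutes with $\lambda_\omega(G)$, is twisted right convolution by the $L^2$ function $k$ whose Fourier transform is the rank-one projection onto $e_1(\pi)$ over $X$, and $k=k^**_\omega k$ forces $k$ continuous with $k(e)=\|k\|_2^2=\nu_{G,\omega}(X)$, so your argument then closes.
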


Now we let $G$ be a real reductive group. 
Note that $\overline{G}=G/Z(G)$ is semisimple and its integral points $\overline{G}(\mathbb{Z})$ is a lattice. 
\begin{theorem*}
Let $G=G(\mathbb{R})$ be a real reductive group and $\Gamma=G(\mathbb{Z})$. 
Let $\overline{\Gamma}$ be the image of $\Gamma$ under the quotient map $G\to \overline{G}$. 
We have
\begin{equation*}
d_{\Gamma}(H_{\pi})=\frac{\mu_{\overline{G}}(\overline{\Gamma}/\overline{G})}{|Z\cap \Gamma|}\cdot d\nu_{G}(\pi).
\end{equation*}
\end{theorem*}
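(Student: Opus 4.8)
The plan is to deduce the reductive (non-lattice) statement from the projective lattice Lemma by descending everything along the quotient $q\colon G\to\overline{G}=G/Z$, where $Z=Z(G)$. The group $\overline{G}$ is semisimple and the image $\overline{\Gamma}=q(\Gamma)$ is an arithmetic subgroup of $\overline{G}$, hence a lattice by Borel--Harish-Chandra; this is exactly what repairs the failure of $\Gamma=G(\mathbb{Z})$ to be a lattice in $G$. First I would record the descent of the representation. Since $(\pi,H_\pi)$ is square-integrable only modulo the center, $Z$ acts by a unitary central character $\chi$, and choosing a Borel section $s\colon\overline{G}\to G$ of $q$ and setting $\overline{\pi}(\bar g)=\pi(s(\bar g))$ produces an $\omega$-projective representation of $\overline{G}$ on the same Hilbert space, with $\omega(\bar g,\bar h)=\chi\bigl(s(\bar g)s(\bar h)s(\overline{gh})^{-1}\bigr)$. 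I would then invoke (or set up, from the earlier definition of $\nu_G$ on the projective dual) the identification $d\nu_G(\pi)=d\nu_{\overline G,\omega}(\overline\pi)$, which says that the Plancherel density of $\pi$ for $G$ is literally the $\omega$-twisted Plancherel density of $\overline\pi$ for $\overline G$; this is essentially definitional given the square-integrability-mod-center of $\pi$.

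The crux is the comparison of densities across the finite central extension
\[
1\longrightarrow F\longrightarrow \Gamma\longrightarrow \overline{\Gamma}\longrightarrow 1,\qquad F:=Z\cap\Gamma,
\]
where $F$ is finite (the unit group of the integral points of the central torus). Here $F$ is central in $\Gamma$ and acts on $H_\pi$ by the scalar character $\chi|_F$. I would introduce the central projection $p=\frac{1}{|F|}\sum_{f\in F}\overline{\chi(f)}\,f\in\mathbb{C}[F]\subseteq\mathcal{L}(\Gamma)$, which acts as the identity on $H_\pi$ and satisfies $\tau(p)=1/|F|$ for the canonical trace $\tau$ on $\mathcal{L}(\Gamma)$. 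The compression $p\,\mathcal{L}(\Gamma)\,p$ is naturally isomorphic to the $\omega$-twisted group von Neumann algebra $\mathcal{L}_\omega(\overline{\Gamma})$, and under this isomorphism the restriction of $\tau$ is $\tfrac{1}{|F|}$ times the canonical trace $\overline\tau$ on $\mathcal{L}_\omega(\overline\Gamma)$ (the units $p$ and $\delta_{\bar e}$ correspond, so $\tau(p)=1/|F|=\tfrac1{|F|}\overline\tau(\delta_{\bar e})$). Since the von Neumann dimension scales linearly with the trace and the $\Gamma$-module $H_\pi$ is, through $p$, exactly the $\omega$-projective $\overline\Gamma$-module $H_{\overline\pi}$, I expect
\[
d_\Gamma(H_\pi)=\tfrac{1}{|F|}\,d_{\overline\Gamma,\omega}(H_{\overline\pi})=\tfrac{1}{|Z\cap\Gamma|}\,d_{\overline\Gamma,\omega}(H_{\overline\pi}).
\]

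Finally I would apply the Lemma to the lattice $\overline{\Gamma}$ in the unimodular type I group $\overline{G}$ with cocycle $\omega$, giving $d_{\overline\Gamma,\omega}(H_{\overline\pi})=\mu_{\overline G}(\overline\Gamma/\overline G)\cdot d\nu_{\overline G,\omega}(\overline\pi)$, and combine this with the two identifications of the first paragraph to obtain the claimed formula.

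\textbf{Main obstacle.} The delicate step is the density comparison of the second paragraph: one must verify that $d_\Gamma$ (defined as an analogue of Atiyah's $\Gamma$-dimension even though $\Gamma$ is not a lattice in $G$) is computed by the canonical trace on $\mathcal{L}(\Gamma)$ in a way compatible with compression by the central projection $p$, and that the resulting trace-normalization factor is exactly $1/|F|$ rather than some other power of $|F|$; this is precisely where the non-lattice nature of $\Gamma$ interacts with the projective structure. Subsidiary points to confirm are the discreteness and lattice property of $\overline{\Gamma}$ in $\overline{G}$ (so that the Lemma genuinely applies) and the independence of $\omega$, of $\overline{\pi}$, and of all the densities from the choice of section $s$.
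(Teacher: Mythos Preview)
Your overall strategy matches the paper's: descend to the semisimple quotient $\overline{G}$, apply the projective Lemma to the lattice $\overline{\Gamma}\subset\overline{G}$, and pass back through the finite central extension $1\to F\to\Gamma\to\overline{\Gamma}\to 1$. Your central-projection argument for the factor $1/|F|$ is a valid alternative to the paper's (which instead exhibits an explicit $\Gamma$-equivariant isometry $l^2(\overline{\Gamma})\hookrightarrow l^2(\Gamma)$ and reads off the trace of the resulting projection).

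The genuine gap is in the Plancherel-measure identification. The claim $d\nu_G(\pi)=d\nu_{\overline G,\omega}(\overline\pi)$ is not correct as stated, and it is not ``essentially definitional''. The actual relation (Kleppner--Lipsman) is a product formula $d\nu_G(\pi_{\gamma,\sigma})=d\nu_Z(\gamma)\,d\nu_{\overline G,\overline{\omega_\gamma}}(\sigma)$, where $\gamma$ runs over $\widehat Z$ and $d\nu_Z$ is the Plancherel measure of the center. Since $Z$ contains a real torus, $\widehat Z$ is non-atomic, so the slice of $\widehat G$ with any single central character $\chi$ has $\nu_G$-measure zero; your ``fix $\chi$ and argue pointwise at $\pi$'' scheme therefore never produces a nontrivial $H_X$, and your equality $d_\Gamma(H_\pi)=\tfrac{1}{|F|}\,d_{\overline\Gamma,\omega}(H_{\overline\pi})$ compares densities living on different measure spaces without supplying the Jacobian $d\nu_Z$. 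The paper closes this by slicing a general $X\subset\widehat G$ into fibers $X_\gamma\subset\Pi(\overline G,\overline{\omega_\gamma})$, applying the Lemma and the $1/|F|$ reduction fiberwise, and then integrating over $\widehat Z$; this last step requires both the Kleppner--Lipsman decomposition of $\nu_G$ and a separate additivity statement $\dim_M\int^\oplus H_x\,d\nu(x)=\int\dim_M H_x\,d\nu(x)$ for von Neumann dimension under direct integrals. Neither ingredient appears in your outline. In effect the two missing $d\nu_Z$ factors cancel, which is why your endpoint formula comes out right, but the argument as written is not well-posed.
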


In Section \ref{sdenvna}, we quickly review von Neumann dimensions and give the definition of von Neumann densities, which are shown to be a well-defined notion as the analogue of $\Gamma$-dimensions. 
Section \ref{sASproj} is devoted to Theorem \ref{tdimmeas}, which is the Atiyah-Schmid formula extended for projective tempered representations. 
In Section \ref{sASred}, we state and prove the result above (see Theorem \ref{tASred}), which is the Atiyah-Schmid formula that works for reductive groups.

\section{The density over a von Neumann algebra}\label{sdenvna}

Let $\Gamma$ be a countable discrete group and $\{\delta_{\gamma}\}_{\gamma\in \Gamma}$ be the canonical orthonormal basis of $l^2(\Gamma)$. 
We let $\lambda$ and $\rho$ be the left and right regular representations of $\Gamma$ on $l^2(\Gamma)$ respectively.
For all $\gamma,\gamma'\in \Gamma$, we have
$\lambda(\gamma')\delta_{\gamma}=\delta_{\gamma'\gamma}$ and $\rho(\gamma')\delta_{\gamma}=\delta_{\gamma\gamma'^{-1}}$. 
Let $\mathcal{L}(\Gamma)$ be the strong operator closure of the complex linear span of $\lambda(\gamma)$'s.  
This is the {\it left group von Neumann algebra of $\Gamma$}. 

Let $\omega$ be a normalized 2-cocycle of $\Gamma$. 
Let $\lambda_{\omega},\rho_{\omega}$ be the $\omega$-projective left and right regular representation of $\Gamma$ on $l^2(\Gamma)$, which are defined as $\lambda_{\omega}(\gamma)f(x)=\omega(x^{-1},\gamma)f(\gamma^{-1}x)$ and $\rho_{\omega}(\gamma)f(x)=\omega(\gamma^{-1},x^{-1})f(x\gamma)$ for $f\in l^2(\Gamma)$. 
Following \cite{VaDo2024,Enstad22}, we  define
\begin{enumerate}
\item the {\it $\omega$-twisted left group von Neumann algebra} $\mathcal{L}(\Gamma,\omega)$ = the weak operator closed algebra generated by $\{\lambda_{\omega}(\gamma)|\gamma\in \Gamma\}$; 
    \item the {\it $\omega$-twisted right group von Neumann algebra} $\mathcal{R}(\Gamma,\omega)$ = the weak operator closed algebra generated by $\{\rho_{\omega}(\gamma)|\gamma\in \Gamma\}$. 
\end{enumerate}
It is known that $\mathcal{R}(\Gamma,\overline{\omega})$ is the commutant of $\mathcal{L}(\Gamma,\omega)$ on $l^2(\Gamma)$, where $\overline{\omega}$ denotes the complex conjugate of $\omega$ (see \cite[\S 1]{Kleppner62}). 
If $\omega$ is trivial, $\mathcal{L}(\Gamma,\omega)$ reduces to $\mathcal{L}(\Gamma)$. 
Thus, if $H^2(\Gamma;\mathbb{T})$ is trivial, all these $\mathcal{L}(\Gamma,\omega)$ are isomorphic to the untwisted group von Neumann algebra $\mathcal{L}(\Gamma)$. 
For instance, as $\PSL(2,\mathbb{Z})\cong \mathbb{Z}/2\mathbb{Z}*\mathbb{Z}/3\mathbb{Z}$, we have $H^2(\PSL(2,\mathbb{Z});\mathbb{T})=1$ (see \cite[Corollary 6.2.10]{Web}).

There is a natural trace $\tau\colon \mathcal{L}(\Gamma,\omega)\to \mathbb{C}$  given by
\begin{center}
    $\tau(x)=\langle x\delta_e,\delta_e\rangle_{l^2(\Gamma)}$.
\end{center}
It gives an inner product on $\mathcal{L}(\Gamma,\omega)$ defined by $\langle x,y \rangle_{\tau}=\tau(xy^*)$ for $x,y\in \mathcal{L}(\Gamma,\omega)$. 

Generally, for a tracial von Neumann algebra $M$ with a tracial state $\tau$,  
the GNS representation of $M$ gives us a Hilbert space $L^2(M)$ from the completion with respect to the inner product $\langle x,y\rangle_{\tau}=\tau(xy^*)$. 
One can show that $L^2(M)$ is exactly $l^2(\Gamma)$ when $M$ is the (twisted)  left or right von Neumann algebra of $\Gamma$. 

Suppose $\pi\colon M\to B(H)$ is a normal unital representation of $M$ with both $M$ and $H$ separable.  
There exists an isometry $u\colon H\to L^2(M)\otimes l^2(\mathbb{N})$, which commutes with the actions of $M$:
\begin{center}
$u\circ\pi(x)=(\lambda(x)\otimes\id_{l^2(\mathbb{N})} )\circ u$, $\forall x\in M$,
\end{center}
where $\lambda\colon M\to B(L^2(M))$ denotes the left multiplication.  
Then $p=uu^*$ is a projection in $B(L^2(M)\otimes l^2(\mathbb{N}))$ such that $H\cong p( L^2(M)\otimes l^2(\mathbb{N}))$ as modules over $M$. 
The following result is well-known (see \cite[Proposition 8.2.3]{APintrII1}). 

\begin{lemma}\label{ltrdim}
The correspondence $H\mapsto p$ above defines a bijection between the set of equivalence classes of left $M$-modules and the set of equivalence classes of projections in $(M'\cap B(L^2(M)))\otimes B(l^2(\mathbb{N}))$. 
\end{lemma}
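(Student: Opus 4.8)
The plan is to check the three ingredients of a bijection---well-definedness, surjectivity, and injectivity---under the identification of projections up to Murray--von Neumann equivalence, working inside $N := (M'\cap B(L^2(M)))\otimes B(l^2(\mathbb{N}))$, which is precisely the commutant of $\{\lambda(x)\otimes\id_{l^2(\mathbb{N})}:x\in M\}$ acting on $\mathcal{H}:=L^2(M)\otimes l^2(\mathbb{N})$. First I would record that $p=uu^*$ really lands in $N$: since $u$ is an isometry, $p$ is a projection, and taking adjoints in $u\pi(x)=(\lambda(x)\otimes\id)u$ gives $u^*(\lambda(x)\otimes\id)=\pi(x)u^*$, whence $p(\lambda(x)\otimes\id)=u\pi(x)u^*=(\lambda(x)\otimes\id)p$, so $p$ commutes with the $M$-action and lies in $N$.

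Next I would show the class $[p]$ depends only on the isomorphism class of $H$. If $u_1,u_2$ are two intertwining isometries (possibly for isomorphic modules $H_1\cong H_2$, after composing with an $M$-linear unitary), then $w:=u_2u_1^*$ intertwines the $M$-actions, hence lies in $N$, and satisfies $w^*w=u_1u_1^*=p_1$ and $ww^*=u_2u_2^*=p_2$; thus $p_1\sim p_2$. Surjectivity and injectivity are then formal. Given a projection $p\in N$, its range $H:=p\mathcal{H}$ is a closed $(\lambda\otimes\id)(M)$-invariant subspace, hence a separable normal left $M$-module whose inclusion $u\colon H\hookrightarrow\mathcal{H}$ is an intertwining isometry with $uu^*=p$, so $[p]$ is attained. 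Conversely, if $p_1\sim p_2$ via a partial isometry $w\in N$, then $w$ restricts to an $M$-linear unitary $p_1\mathcal{H}\to p_2\mathcal{H}$, giving $H_1\cong H_2$.

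The one substantive point---and the step I expect to be the main obstacle---is the existence of an intertwining isometry $u$ for an \emph{arbitrary} separable module $H$, i.e.\ that $\mathcal{H}$ is a generator large enough to contain every separable $M$-module. Here the tracial structure of $M$ is decisive. By separability, $H$ splits as a countable orthogonal direct sum of cyclic submodules $\overline{\pi(M)\xi_n}$. For each, the normal positive functional $\phi_n(x)=\langle\pi(x)\xi_n,\xi_n\rangle$ corresponds, via the identification $M_*\cong L^1(M)$ valid in the tracial case, to a positive $h_n\in L^1(M)$ with $\phi_n(x)=\tau(xh_n)$; its square root $h_n^{1/2}\in L^2(M)$ then satisfies $\phi_n(x)=\langle xh_n^{1/2},h_n^{1/2}\rangle_{L^2(M)}$, so by uniqueness of the GNS construction $\overline{\pi(M)\xi_n}$ is $M$-isomorphic to the cyclic submodule $\overline{Mh_n^{1/2}}\subseteq L^2(M)$. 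Assembling these embeddings into the countably many copies of $L^2(M)$ provided by $l^2(\mathbb{N})$ yields $u$. This is exactly the content cited as \cite[Proposition 8.2.3]{APintrII1}; once $u$ is in hand, the bookkeeping above completes the proof.
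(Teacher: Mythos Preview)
The paper does not supply its own proof of this lemma: it simply records the statement as well known and cites \cite[Proposition 8.2.3]{APintrII1}. Your proposal is correct and fills in exactly the argument that reference contains---the commutation check placing $p$ in $N$, Murray--von Neumann equivalence via $w=u_2u_1^*$ for well-definedness, the obvious surjectivity and injectivity, and the key existence step (decompose into cyclic pieces, realise each inside $L^2(M)$ via the Radon--Nikodym theorem for $\tau$, and stack them into the $l^2(\mathbb{N})$ copies). There is nothing to compare against; you have simply proved what the paper takes for granted.
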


The {\it von Neumann dimension} of the $M$-module $H$ is defined as
\begin{equation}
    \dim_M(H)=(\tau\otimes \Tr)(p)\in [0,\infty],
\end{equation}
which is independent of the choice of the particular projection $p$ in its equivalence class. 
We know that $\dim_M(L^2(M))=1$. 
If $M$ is a finite factor, i.e., $Z(M)\cong\mathbb{C}$, the tracial state is unique and $\dim_M(H)=\dim_M(H')$ if and only if $H$ and $H'$ are isomorphic as $M$-modules. 
When $M$ is not a factor, there is a $Z(M)$-valued trace which determines the isomorphism class of $M$-modules (see \cite{Bek04}).  

It is well-known that the dimensions are countably summable, i.e.,
\begin{center}
    $\dim_M(\oplus_i H_i)=\sum_i \dim_M(H_i)$.
\end{center} 
We will generalize this to direct integrals by introducing the following notion. 

Let $(M,\tau)$ be  a tracial von Neumann algebra. 
\begin{definition}[von Neumann densities]
Let $(X,\nu)$ be a measure space and $\{H_x\}_{x\in X}$ be a field of Hilbert spaces over $X$ such that there exists a constant $C>0$ and for any measurable $Y\subset X$, 
\begin{equation} 
H_Y=\int_{Y}^{\oplus }H_{x}d\nu(x) \text{~is an~} M\text{-module}  \text{~such that~}
    \dim_{M}H_Y=C\cdot \nu(Y). 
\end{equation}
We call $C\cdot d\nu(x)$ the \textbf{von Neumann density} of $H_x$ over $M$ and denote it by $d_{M}(H_x)$. 

If $M=\mathcal{L}(\Gamma)$ or $\mathcal{L}(\Gamma,\omega)$ for a contable discrete group $\Gamma$ and a $2$-cocycle $\omega$, we simply denote $d_{M}(H_x)$ by $ d_{\Gamma}(H_x)$ or $d_{\Gamma,\omega}(H_x)$ and call it \textbf{$\Gamma$-density} or \textbf{$(\Gamma,\omega)$-density} of $H_x$.    
\end{definition}

Here we do not assume locally that $H_{x}$ is a $M$-module. 
But if we assume $H_x$ is a $M$-module and $\nu(\{x\})=1$, we obtain $d_{M}(H_x)=\dim_M H_x$. 
Furthermore, we will show that
\begin{center}
$\dim_{M}\int_{Y}^{\oplus}H_{x}d\nu(x)=\int_{Y}d_{M}H_{x}d\nu(x)$ 
\end{center}
for any finite measure subset $Y$ of $X$ (see Proposition \ref{pvndimint}) 

Recall that the commutant $M'$ of the tracial von Neumann algebra $(M,\tau)$ on $L^2(M)$ is $JMJ$, where $J\colon L^2(M)\to L^2(M)$ is the conjugate linear map which extends $x\mapsto x^{*}$. 
The canonical trace on this commutant is given as $\tr_{M'}(JxJ)=\tau(x)$ for $x\in M$. 
The commutant of $M$ acting on $L^2(M)\otimes l^2(\mathbb{N})$ is $JMJ\otimes B(l^2(\mathbb{N}))$.
Then the trace on the commutant $JMJ\otimes B(l^2(\mathbb{N}))$ is given as 
\begin{center}
    $\Tr_{M'}(x)=(\tr_{M'}\otimes\Tr)(x)$,
\end{center}
where $\Tr$ is the canonical trace on $B(l^2(\mathbb{N}))$ that sends one-rank  projections to $1$.
\begin{proposition}\label{pvndimint}
Let $(X,\nu)$ be a separable finite measure space. 
Let $\{H_x|x\in X\}$ be a measurable field of modules over a finite tracial von Neumann algebra $(M,\tau)$. 
Suppose $\dim_{M}H_{x}$ is finite for each $x\in X$. 
We have
\begin{equation}\label{eintegvndim}
\dim_{M}\int_{X}^{\oplus}H_{x}d\nu(x)=\int_{X}\dim_{M}H_{x}d\nu(x)
\end{equation}
if both sides are finite. 
\end{proposition}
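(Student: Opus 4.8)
The plan is to reduce the computation of $\dim_M \int_X^\oplus H_x\,d\nu(x)$ to the value of a single trace on a decomposable projection, and then to disintegrate that trace fiberwise. First I would use Lemma \ref{ltrdim} to realize each fiber as $H_x \cong p_x(L^2(M)\otimes l^2(\mathbb{N}))$ for a projection $p_x \in (M'\cap B(L^2(M)))\otimes B(l^2(\mathbb{N}))$, so that $\dim_M H_x = (\tr_{M'}\otimes\Tr)(p_x)$. The first technical point is to perform this realization measurably in $x$: one needs a measurable field of isometries $u_x\colon H_x \to L^2(M)\otimes l^2(\mathbb{N})$ intertwining the $M$-actions, so that $x\mapsto p_x = u_x u_x^*$ is a measurable field of projections and $x \mapsto (\tr_{M'}\otimes\Tr)(p_x) = \dim_M H_x$ is a measurable function. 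Since $(X,\nu)$ is separable and $\{H_x\}$ is a measurable field, such a measurable selection exists; this is the role of the separability hypothesis.

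Next I would assemble these fiberwise data into a direct integral. The field $\{u_x\}$ integrates to an isometry $u = \int_X^\oplus u_x\,d\nu$ embedding $H = \int_X^\oplus H_x\,d\nu$ into $\int_X^\oplus (L^2(M)\otimes l^2(\mathbb{N}))\,d\nu$, and $P := uu^* = \int_X^\oplus p_x\,d\nu$ is a decomposable projection commuting with the (diagonal, constant) $M$-action. Because each $u_x$ intertwines the $M$-actions, $u$ intertwines the $M$-action on $H$ with $\lambda\otimes\id$, so $H$ is the range of $P$ as an $M$-module, and $\dim_M H$ is the value at $P$ of the canonical trace $\Phi$ on the direct-integral commutant $\int_X^\oplus \big[(M'\cap B(L^2(M)))\otimes B(l^2(\mathbb{N}))\big]\,d\nu$.

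The heart of the argument is the disintegration of that trace, namely the identity $\Phi(\int_X^\oplus T_x\,d\nu) = \int_X (\tr_{M'}\otimes\Tr)(T_x)\,d\nu(x)$ for positive decomposable operators, expressing that the canonical trace on a direct integral of von Neumann algebras is the fiberwise integral of the constant field of traces. Granting this, one gets at once $\dim_M H = \Phi(P) = \int_X (\tr_{M'}\otimes\Tr)(p_x)\,d\nu(x) = \int_X \dim_M H_x\,d\nu(x)$. To prove the disintegration identity I would first check it on constant fields, where $\int_Y^\oplus H_0\,d\nu \cong H_0 \otimes L^2(Y,\nu)$ contributes $\dim_M H_0 \cdot \nu(Y)$ because the trace on the diagonal factor $L^\infty(Y,\nu)$ is integration against $\nu$; extend additively to simple fields; and finally pass to a general measurable field by monotone approximation, using normality of $\Phi$ on the left together with the monotone convergence theorem for $\int_X \dim_M H_x\,d\nu$ on the right. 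The finiteness of both sides is what rules out $\infty-\infty$ ambiguities in this limit and guarantees that the approximating values converge to $\Phi(P)$.

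The step I expect to be the main obstacle is exactly this trace disintegration: correctly identifying the canonical semifinite trace on the direct-integral algebra with the fiberwise integral of traces (so that the diagonal algebra $L^\infty(X,\nu)$ is weighted by $\nu$ rather than by the ambient operator trace) and controlling it under the monotone approximation. Paired with this is the measurable selection of the intertwining isometries $u_x$, which is what makes the decomposable projection $P$ well defined in the first place. Once these two measurability-and-trace issues are settled, the asserted equality follows in one line.
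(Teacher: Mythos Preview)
Your proposal is correct and follows the same overall architecture as the paper's proof: choose $M$-linear isometric embeddings $u_x\colon H_x\to L^2(M)\otimes l^2(\mathbb{N})$ measurably in $x$, integrate to $u_X=\int_X^\oplus u_x\,d\nu$, and compute $\dim_M\int_X^\oplus H_x\,d\nu$ as the canonical trace of the decomposable projection $u_Xu_X^*$. The one substantive difference is in how the trace disintegration $\Phi\bigl(\int_X^\oplus T_x\,d\nu\bigr)=\int_X(\tr_{M'}\otimes\Tr)(T_x)\,d\nu(x)$ is established. You propose an approximation scheme (constant fields, then simple fields, then monotone limits). The paper instead exploits the finiteness hypothesis on $\dim_M H_x$ to place each $u_xu_x^*$ in the Hilbert algebra $A_x=JMJ\otimes B(l^2(\mathbb{N}))_{\HS}$; then $\int_X^\oplus A_x\,d\nu$ is again a Hilbert algebra, and Dixmier's general result on traces of Hilbert algebras identifies its natural trace as $\int_X^\oplus(\tr_{M'}\otimes\Tr)\,d\nu$ in one stroke. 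The paper's route is shorter by outsourcing the work to an abstract theorem; yours is more self-contained and makes the role of the finiteness hypothesis (ruling out $\infty-\infty$) more transparent. Either argument closes the proof.
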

\begin{proof}
For each $x\in X$, we let $L^2(\mathbb{N})_{x}= L^2(\mathbb{N})$. 
There exists an $M$-linear isometric embedding
\begin{center}
    $u_x\colon H_x\to L^2(M)\otimes l^2(\mathbb{N})_{x}$
\end{center}
such that
\begin{center}
    $\dim_{M}H_x=\Tr_{M'}(u_{x}u_{x}^*)=(\tr_{M'}\otimes\Tr)(u_{x}u_{x}^*)$.  
\end{center}
Let $B_x=B(l^2(\mathbb{N})_x)_{\HS}$ be the subspace of Hilbert-Schmidt operators in $B(l^2(\mathbb{N})_x)$. 
It is known that $B_x$ is a Hilbert algebra equipped with the product $\langle a,b\rangle=\Tr(ab^{*})$ (see \cite[Appendix A.54]{DiCalg} and \cite[Chapter I.5]{DivNalg}). 
We let 
\begin{center}
    $A_x:=JMJ\otimes B(l^2(\mathbb{N})_x)_{\HS}=JMJ\otimes B_{x}$,
\end{center}
which is a Hilbert algebra with the inner product given by $\langle a,b\rangle=(\tr_{M'}\otimes \Tr)(ab^{*})$.  
As $\dim_{M}H_{x}$ is finite, we have $u_{x}u_{x}^{*}\in A_x$.

Let us consider the map
\begin{equation}\label{euX}
\begin{aligned}
    u_X=\int_{X}^{\oplus}u_x d\nu(x)\colon &\int_{X}^{\oplus}H_x d\nu(x)&\to &\int_{X}^{\oplus}L^2(M)\otimes l^2(\mathbb{N})_{x}d\nu(x),\\
    & \int_{X}^{\oplus}v_{x}d\nu(x) &\mapsto &\int_{X}^{\oplus}u_{x}(v_x)d\nu(x),
\end{aligned}
\end{equation}
which is also an $M$-linear isometric embedding. 
For the right side of \ref{euX}, we have
\begin{center}
    $\int_{X}^{\oplus}L^2(M)\otimes l^2(\mathbb{N})_{x}d\nu(x)\cong L^2(M)\otimes L^2(X,\nu)\otimes l^2(\mathbb{N})$. 
\end{center}

Note that $ \int_{X}^{\oplus}JMJ\otimes B(l^2(\mathbb{N})_{x})_{\HS}d\nu(x)$ is a Hilbert algebra whose inner product comes from the direct integral, which can be written as
\begin{center}
$\langle  \int_{X}^{\oplus} a_{x}\otimes v_{x}d\nu(x),\int_{X}^{\oplus} b_{x}\otimes w_{x}d\nu(x)\rangle=\int_{X}\tr_{M'}(a_{x}b_{x}^{*})\cdot\Tr_{L^2(\mathbb{N})}(v_{x}w_{x}^{*})d\nu(x)$,
\end{center}
for $\int_{X}^{\oplus} a_{x}\otimes v_{x}d\nu(x),\int_{X}^{\oplus} b_{x}\otimes w_{x}d\nu(x) \in  \int_{X}^{\oplus}JMJ\otimes B(l^2(\mathbb{N})_{x})_{\HS}d\nu(x)$. 
By \cite[Chapter II.5 Theorem 1]{DivNalg} and the inner product described above, its natural trace is given as
\begin{center}
$\int_{X}^{\oplus}\tr_{M'}\otimes\Tr_{L^2(\mathbb{N})}d\nu(x)$. 
\end{center}
By the assumption that both sides of \ref{eintegvndim} are finite, 
we have
\begin{center}
    $u_{X}u_{X}^*\in  \int_{X}^{\oplus}JMJ\otimes B(l^2(\mathbb{N})_{x})_{\HS}d\nu(x)$, 
\end{center} 
whose norm is finite. 

Hence, by the definition of $M$-dimensions and the assumption on their finiteness, we have
\begin{equation}
\begin{aligned}
    \dim_{M}\int_{X}^{\oplus}H_x d\nu(x)
&=\left(\int_{X}^{\oplus}\tr_{M'}\otimes\Tr_{L^2(\mathbb{N})}d\nu(x)\right)(u_{X}u_{X}^{*})\\
&=\int_{X}\left(\tr_{M'}\otimes\Tr_{L^2(\mathbb{N})}\right)(u_{x}u_{x}^*)d\nu(x)\\
&=\int_{X}\dim_{M}H_{x}d\nu(x). 
\end{aligned}
\end{equation}
\end{proof}

\section{The Atiyah-Schmid formula for projective representations}\label{sASproj}

We let $G$ be a unimodular locally compact group of type I. 
Following \cite[\S 7.2]{Fo2}, a group is called type I if each primary representation\footnote{A unitary representation $(\pi,H)$ of $G$ is called primary if $\pi(G)''$, the von Neumann algebra it generates, is a factor, i.e., $Z(\pi(G)'')\cong \mathbb{C}$. 
Equivalently, assuming $(\pi,H)$ is a direct sum of irreducible representations, $(\pi,H)$ is primary if and only if $(\pi,H)$ is a direct sum of some single irreducible representation}  
generates a type I factor. 
More precisely, for any unitary representation $(\pi,H)$ of $G$, if $\pi(G)''$ is factor, then $\pi(G)''$ is a factor of type I, i.e. $\pi(G)''\cong B(K)$ for some Hilbert space $K$ (possibly infinite-dimensional).
The class of type I groups contains real linear algebraic groups (see \cite[\S 8.4]{Kiri76}), 
reductive $p$-adic groups (see \cite{Bnst74}), and
also reductive adelic group (see \cite[Appendix]{Clz07}).

Let $\omega$ be a normalized {\it 2-cocycle} of $G$, i.e.,  a Borel map $\omega\colon G\times G\to \mathbb{T}$.
\begin{center}
$\omega(g,h)\omega(gh,k)=\omega(g,hk)\omega(h,k)$ and $\omega(g,e)=\omega(e,g)=1$
\end{center}
for all $g,h,k\in G$. 
Let $Z^2(G,\mathbb{T})$ be the group of normalized 2-cocycles of $G$. 
Two 2-cocycles $\omega_1,\omega_2\in Z^2(G,\mathbb{T})$ are usually called {\it cohomologous} if there exists $\varphi\colon G\to \mathbb{T}$ such that $\varphi(e)=1$ and $\omega_1(g,h)\overline{\omega_2}(g,h)=\varphi(gh)\overline{\varphi}(g)\overline{\varphi}(h)$ for all $g,h\in G$.  
Then $H^2(G,\mathbb{T})$ is defined as the quotient of $Z^2(G,\mathbb{T})$ by the abelian group generated by the 2-cocycles which are cohomologus to $1$. 

Given a 2-cocycle $\omega$, by a $\omega$-projective representation we mean a continuous map $\pi\colon G\to U(H_{\pi})$ such that $\pi(g)\pi(h)=\omega(g,h)\pi(gh)$ for all $g,h\in G$. 
We let
\begin{itemize}
\item $\Pi(G,\omega)$= the set of equivalence classes of $\omega$-projective irreducible representations of $G$. 
    \item $\lambda_{\omega}$= the $\omega$-projective left regular representation of $G$ on $L^2(G)$ defined as
    \begin{equation}\label{eprojleftreg}
        \lambda_{\omega}(g)f(x)=\omega(x^{-1},g)f(g^{-1}x)
    \end{equation}
for all $g,x\in G$, $f\in L^2(G)$.
\item $\rho_{\omega}$= the $\omega$-projective left and right regular representation of $G$ on $L^2(G)$ defined as
\begin{equation}\label{eprojrightreg}
    \rho_{\omega}(g)f(x)=\omega(g^{-1},x^{-1})f(xg)
\end{equation}
for all $g,x\in G$, $f\in L^2(G)$.
\end{itemize}
The following result 
was proved by Kleppner and Lipsman (see \cite[I.Theorem 7.1]{KlepLips1972}). 

\begin{theorem}\label{tprojPthm}[Kleppner-Lipsman, 1972]
Let $G$ be a locally compact unimodular group with Haar measure $\mu$.  
There exists a positive standard Borel measure $\nu_{G,\omega}$ on $\Pi(G,\omega)$ and 
a measurable field of representations $(\pi,H_{\pi})$ such that
\begin{enumerate}
    \item there exists an isomorphism $\Psi\colon L^2(G,\mu)\to \int_{\Pi(G,\omega)}^{\oplus}H_{\pi}\otimes H_{\pi}^{*}\dd \nu_{G,\omega}(\pi)$ given by the extension of the Fourier transform $\mathcal{F}\colon f\mapsto \widehat{f}(\pi)=\int_{G}f(g)\pi(g^{-1})d\mu (g)$ with $f\in L^1(G)$, 
    which intertwines
    \begin{enumerate}
        \item $\lambda_{\omega}$ with $\int_{\Pi(G,\omega)}^{\oplus}\pi\otimes \id_{H_\pi} \dd \nu_{G,\omega}(\pi)$;
        \item $\rho_{\omega}$ with $\int_{\Pi(G,\omega)}^{\oplus}\id_{H_\pi}\otimes \overline{\pi} \dd \nu_{G,\omega}(\pi)$;
    \end{enumerate}
    \item For $f,h\in \mathcal{J}^1=L^1(G)\cap L^2(G)$, we have
    \begin{center}
        $\int_{G}f(g)\overline{h}(g)\dd \mu_{G}(g)=\int_{\Pi(G,\omega)}\Tr(\pi(f)\pi(h)^{*})d \nu_{G,\omega}(\pi)$. 
    \end{center} 
\end{enumerate}
\end{theorem}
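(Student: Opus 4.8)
The plan is to deduce the statement from the classical Plancherel theorem for unimodular type I groups by passing to the central extension determined by $\omega$. Concretely, I would form the group $G^{\omega}=G\times\mathbb{T}$ with the twisted multiplication $(g,s)(h,t)=(gh,\omega(g,h)st)$, giving a central extension $1\to\mathbb{T}\to G^{\omega}\to G\to 1$. The point of this construction is that $\omega$-projective representations of $G$ correspond bijectively to genuine unitary representations $\sigma$ of $G^{\omega}$ on which the central circle acts through the tautological character $t\mapsto t$, via $\pi(g)=\sigma(g,1)$; under this correspondence $\Pi(G,\omega)$ is identified with one fiber of $\widehat{G^{\omega}}$ over $\widehat{\mathbb{T}}=\mathbb{Z}$.

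Next I would apply the ordinary Plancherel theorem to $G^{\omega}$, which requires knowing that $G^{\omega}$ is again unimodular, second countable, and of type I. Granting this, one obtains an isomorphism $L^2(G^{\omega})\cong\int_{\widehat{G^{\omega}}}^{\oplus}H_{\sigma}\otimes H_{\sigma}^{*}\dd\nu_{G^{\omega}}(\sigma)$ intertwining the two-sided regular representation with $\int\sigma\otimes\overline{\sigma}$. I would then decompose $L^2(G^{\omega})=L^2(G\times\mathbb{T})$ along the central circle using Fourier analysis on $\mathbb{T}$, writing $L^2(G^{\omega})=\bigoplus_{n\in\mathbb{Z}}L^2(G)\otimes\chi_{n}$, where $\chi_n(t)=t^{n}$ and the summand indexed by $n$ is precisely the subspace on which the center acts by $\chi_n$. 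On this summand the left (resp. right) translation action of $G^{\omega}$ is identified, after the standard unitary normalization, with the projective regular representation $\lambda_{\omega^{n}}$ (resp. $\rho_{\omega^{n}}$) of $G$ on $L^2(G)$; in particular one grade recovers $(L^2(G),\lambda_{\omega},\rho_{\omega})$. Restricting the $G^{\omega}$-Plancherel decomposition to this piece, and defining $\nu_{G,\omega}$ to be the restriction of $\nu_{G^{\omega}}$ to the corresponding fiber, yields the desired isomorphism $\Psi$ onto $\int_{\Pi(G,\omega)}^{\oplus}H_{\pi}\otimes H_{\pi}^{*}\dd\nu_{G,\omega}(\pi)$.

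It then remains to match the explicit formulas. I would check that under the identification above the Fourier transform on $G^{\omega}$, restricted to the relevant summand, is precisely the twisted transform $\mathcal{F}\colon f\mapsto\widehat{f}(\pi)=\int_G f(g)\pi(g^{-1})\dd\mu(g)$; this yields the intertwining relations (1a) and (1b) directly from the corresponding statements for $\sigma\otimes\mathrm{id}$ and $\mathrm{id}\otimes\overline{\sigma}$ on $G^{\omega}$. The trace identity (2) is the restriction to $\mathcal{J}^1$ of the Plancherel trace identity for $G^{\omega}$: for $f,h\in L^1(G)\cap L^2(G)$ the convolution operator on the relevant summand has $G^{\omega}$-Plancherel trace equal to $\int_{\Pi(G,\omega)}\Tr(\pi(f)\pi(h)^{*})\dd\nu_{G,\omega}(\pi)$, while its $L^2$ value is $\int_G f\overline{h}\dd\mu$; normalizing Haar measure on $\mathbb{T}$ to total mass one ensures the two constants agree.

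The main obstacle is the first technical input: that the central extension $G^{\omega}$ built from a merely Borel cocycle is a genuine second countable locally compact group and is of type I. The locally compact structure is supplied by Mackey's theorem that a Borel $2$-cocycle on a second countable locally compact group determines such an extension, and one must then argue that the type I property is inherited. This is where I expect the real work to lie, since type I is not automatically preserved under extensions and must be verified using the fact that the representation theory of $G^{\omega}$ fibers over $\widehat{\mathbb{T}}$ into the $\omega^{n}$-projective theories of the type I group $G$. A secondary and more routine obstacle is the measure-theoretic bookkeeping: verifying measurability of the field $\{H_{\pi}\}$ over $\Pi(G,\omega)$, and that the grading by $n\in\mathbb{Z}$ is compatible with the disintegration of $\nu_{G^{\omega}}$, so that its restriction to a single fiber is a well-defined positive standard Borel measure.
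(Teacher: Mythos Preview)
The paper does not prove this theorem at all; it is quoted as a known result with a citation to Kleppner--Lipsman \cite[I.~Theorem~7.1]{KlepLips1972}, so there is no ``paper's own proof'' to compare against. Your outline via the Mackey central extension $G^{\omega}=G\times_{\omega}\mathbb{T}$ and the ordinary Plancherel theorem for $G^{\omega}$ is precisely the strategy of Kleppner--Lipsman's original argument, and the obstacle you single out---that $G^{\omega}$ (equivalently, the $\omega$-twisted group algebra of $G$) be of type~I---is exactly the hypothesis under which their Theorem~7.1 is stated and proved; the remaining bookkeeping (measurability of the field, disintegration over $\widehat{\mathbb{T}}$, matching of the Fourier transforms and the Parseval identity) is handled there as you describe.
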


We will call $\nu_{G,\omega}$ the {\it $\omega$-Plancherel measure} on $\Pi(G,\omega)$. 
Note that if $\omega$ is trivial, this theorem reduces to the ordinary Plancherel theorem (see \cite[\S 7]{Fo2}). 

Let $X$ be a $\nu_{G,\omega}$-measurable subset of 
$\Pi(G,\omega)$ with finite $\omega$-Plancherel measure, i.e., $\nu_{G,\omega}(X)<\infty$. 
Define
\begin{center}
$H_X=\int_{X}^{\oplus}H_{\pi}d\nu_{G,\omega}(\pi)$,   
\end{center} 
which is the direct integral of the underlying Hilbert space $H_{\pi}$ of the $\omega$-projective representation $\pi\in X$.  
Suppose $\{e_k(\pi)\}_{k\geq 1}$ is an orthonormal basis of $H_{\pi}$. 
We have the following natural isometric isomorphism from $H_X$ to a subspace of $L^2(G)$: 
\begin{equation}\label{eHXsub}
\begin{aligned}
H_X~~~~~~~~~&\cong ~~~\int_{X}^{\oplus}H_{\pi}\otimes e_1(\pi)^* d\nu_{G,\omega}(\pi)\\
v=\int_{X}^{\oplus}v(\pi)d\nu(\pi)~~&\mapsto~~~ \int_{X}v(\pi)\otimes e_1(\pi)^{*}d\nu_{G,\omega}(\pi),
\end{aligned}
\end{equation}
which intertwines the following two $\omega$-actions: 
\begin{center}
    $\lambda_{\omega,X}=\int_{X}^{\oplus}\pi d\nu_{G,\omega}(\pi)$ and $\lambda_{\omega}$
\end{center}
of $G$ on $H_X$ and $L^2(G)$ respectively. 
Therefore we will not distinguish these two spaces and denote them both by $H_X$. 

The $(G,\omega)$-equivariant projecion $P_X\colon L^2(G)\to H_X$ can be defined on a dense subspace of $L^2(G)$ as follows:  
{\small
\begin{equation}\label{eHXsubspace}
\int_{\Pi(G,\omega)}^{\oplus}\left(\sum\limits_{i,j\geq 1}a_{i,j}(\pi)e_j(\pi)\otimes e_i(\pi)^*\right)d\nu_{G,\omega}(\pi)\mapsto \int_{X}^{\oplus}\left(\sum\limits_{j\geq 1}a_{1,j}(\pi)e_j(\pi)\otimes e_1(\pi)^*\right)d\nu_{G,\omega}(\pi)
,
\end{equation}
}
where all but finite $a_{i,j}(\pi)\in \mathbb{C}$ are zero for each $\pi$. 

Given two vectors $v=\int_{X}^{\oplus}v(\pi)d\nu_{G,\omega}(\pi)$ and $w=\int_{X}^{\oplus}w(\pi)d\nu_{G,\omega}(\pi)$ in $H_X$ with $v(\pi),w(\pi)\in H_{\pi}$, we have $v(\pi)\otimes w(\pi)^{*}\in H_{\pi}\otimes H_{\pi}^*$.
As we can identify $H_{\pi}\otimes H_{\pi}^*$ with the space of Hilbert-Schmidt operator on $H_{\pi}$, 
we will also treat $v(\pi)\otimes w(\pi)^{*}$ as a Hilbert-Schmidt operator in $B(H_{\pi})$. 
We define a function on $G$ by
\begin{center}
$C_{v,w}(g)=\langle \lambda_{\omega ,X}(g^{-1})v,w\rangle_{H_X},$
\end{center}
which is the matrix coefficient function attached to $v,w$. 

The twisted convolution $\lambda
_{\omega}\colon L^1(G)\to B(L^2(G))$ is given by 
\begin{equation}\label{etwinv}
\begin{aligned}
    (\lambda_{\omega}(f)h)(x)=(f*_{\omega}h)(x):=\int_{G}\omega(x,y^{-1})f(xy^{-1})h(y)d\mu(y).
\end{aligned}
\end{equation}
Let $\|f\|_1$ denote the $L^1$-norm of $f\in L^1(G)$. 
\begin{lemma}
    $\|\lambda_{\omega}(f)\|\leq \|f\|_1$.
\end{lemma}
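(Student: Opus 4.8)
The plan is to dominate $\lambda_{\omega}(f)$ pointwise by an ordinary (untwisted) convolution, so that the cocycle $\omega$ enters only through $|\omega|=1$, and then to invoke Young's convolution inequality. Fix $f\in L^{1}(G)$ and $h\in L^{2}(G)$. Since $\omega$ is $\mathbb{T}$-valued, for almost every $x\in G$ I have the pointwise bound
\[
|(\lambda_{\omega}(f)h)(x)|=\left|\int_{G}\omega(x,y^{-1})f(xy^{-1})h(y)\,d\mu(y)\right|\le \int_{G}|f(xy^{-1})|\,|h(y)|\,d\mu(y)=(|f|*|h|)(x),
\]
where $*$ denotes ordinary convolution on the unimodular group $G$. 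Hence $\|\lambda_{\omega}(f)h\|_{2}\le \||f|*|h|\|_{2}$, and it suffices to establish the classical estimate $\||f|*|h|\|_{2}\le \|f\|_{1}\|h\|_{2}$.

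For this I would split $|f(xy^{-1})|=|f(xy^{-1})|^{1/2}\cdot|f(xy^{-1})|^{1/2}$ and apply the Cauchy--Schwarz inequality in the variable $y$, which gives
\[
(|f|*|h|)(x)^{2}\le \left(\int_{G}|f(xy^{-1})|\,d\mu(y)\right)\left(\int_{G}|f(xy^{-1})|\,|h(y)|^{2}\,d\mu(y)\right)=\|f\|_{1}\int_{G}|f(xy^{-1})|\,|h(y)|^{2}\,d\mu(y),
\]
the first factor being $\|f\|_{1}$ by unimodularity of $G$. Integrating in $x$ and interchanging the order of integration by Tonelli's theorem (all integrands are nonnegative) yields
\[
\||f|*|h|\|_{2}^{2}\le \|f\|_{1}\int_{G}|h(y)|^{2}\left(\int_{G}|f(xy^{-1})|\,d\mu(x)\right)d\mu(y)=\|f\|_{1}^{2}\,\|h\|_{2}^{2},
\]
again using invariance of $\mu$ for the inner integral. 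Taking square roots and combining with the pointwise domination gives $\|\lambda_{\omega}(f)h\|_{2}\le \|f\|_{1}\|h\|_{2}$ for every $h\in L^{2}(G)$, i.e. $\|\lambda_{\omega}(f)\|\le \|f\|_{1}$.

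The argument has no genuinely hard step; the one point that requires attention is choosing the \emph{right} inequality. A naive application of Minkowski's integral inequality to \eqref{etwinv} produces only the useless bound $\|f\|_{2}\|h\|_{1}$, so the Cauchy--Schwarz splitting above, which yields the correct pairing $\|f\|_{1}\|h\|_{2}$, is the crux. I would also resist the temptation to write $\lambda_{\omega}(f)=\int_{G}f(g)\lambda_{\omega}(g)\,d\mu(g)$ and bound it by $\int_{G}|f(g)|\,\|\lambda_{\omega}(g)\|\,d\mu(g)=\|f\|_{1}$: although each $\lambda_{\omega}(g)$ is unitary, identifying this integral of unitaries with the convolution \eqref{etwinv} involves a nontrivial cocycle computation, whereas the pointwise domination argument avoids all such bookkeeping and is insensitive to the precise normalization chosen for $\omega$. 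The only structural input is the translation and inversion invariance of Haar measure, which is exactly where the unimodularity hypothesis on $G$ is used.
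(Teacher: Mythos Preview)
Your proof is correct. Both your argument and the paper's reduce the twisted case to the untwisted Young inequality $\|\,|f|*|h|\,\|_{2}\le\|f\|_{1}\|h\|_{2}$ via $|\omega|=1$; the difference lies only in how that classical bound is established. The paper first performs the substitution $y\mapsto y^{-1}x$, rewriting the inner integral as $\int_{G}\omega(x,x^{-1}y)f(y)h(y^{-1}x)\,d\mu(y)$, and \emph{then} applies Minkowski's integral inequality, which yields $\int_{G}|f(y)|\,\|h(y^{-1}\cdot)\|_{2}\,d\mu(y)=\|f\|_{1}\|h\|_{2}$ directly. So your closing editorial remark is misplaced: Minkowski is not useless here---it is exactly the tool the paper uses, and the ``naive'' failure you describe arises only from omitting the change of variables. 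Your Cauchy--Schwarz splitting is an equally standard route to the same estimate, with no real advantage or disadvantage over the paper's approach; both proofs are essentially one line once the cocycle has been discarded.
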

\begin{proof}
By Minkowski's integral inequality, we have
\begin{equation*}
\begin{aligned}
   \|\lambda_{\omega}(f)h\|_2&=\left(\int_{G}\abs[\Big]{\int_{G}\omega(x,y^{-1})f(xy^{-1})h(y)d\mu(y)}^2 d\mu(x)\right)^{1/2}\\
   &=\left(\int_{G}\abs[\Big]{\int_{G}\omega(x,x^{-1}y)f(y)h(y^{-1}x)d\mu(y)}^2 d\mu(x)\right)^{1/2}\\
   &\leq \int_{G}|f(y)|\left(\int_{G}|\omega(x,x^{-1}y)h(y^{-1}x)|d\mu(x)\right)^{1/2}d\mu(y)\\
   &\leq \|f\|_{1}\cdot \|h\|_{2}.
\end{aligned}
\end{equation*}
\end{proof}


\begin{lemma}\label{lmxcoef}
\begin{enumerate}
    \item For $v,w\in H_X$ with $v\in \mathcal{F}(\mathcal{J}^{1})$, we have 
    \begin{center}
        $C_{v,w}\in L^2(G)$.
    \end{center} 
    \item For $v_1,v_2,w_1,w_2\in H_X$ with $v_1,v_2\in \mathcal{F}(\mathcal{J}^{1})$, we have
    \begin{equation}\label{eLem3.2}
        \langle C_{v_1,w_1},C_{v_2,w_2}\rangle_{L^2(G)}=\int_{X}\Tr(v_1(\pi) w_1(\pi)^{*} w_2(\pi) v_2(\pi)^{*})d\nu_{G,\omega}(\pi). 
    \end{equation}
\end{enumerate}

\end{lemma}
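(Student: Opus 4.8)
The plan is to deduce both parts from the Plancherel isometry of Theorem~\ref{tprojPthm}, after first placing $C_{v,w}$ in $L^2(G)$ and then identifying its Fourier transform. Using the isometric intertwiner $H_X\hookrightarrow L^2(G)$ of \ref{eHXsub}, which matches $\lambda_{\omega,X}$ with $\lambda_\omega$, I regard $v,w$ as functions in $L^2(G)$ and record
\[
C_{v,w}(g)=\langle \lambda_\omega(g^{-1})v,w\rangle_{L^2(G)}=\int_G \omega(x^{-1},g^{-1})\,v(gx)\,\overline{w(x)}\,d\mu(x),
\]
since $\lambda_\omega(g^{-1})v(x)=\omega(x^{-1},g^{-1})v(gx)$; fibrewise this is $C_{v,w}(g)=\int_X \langle \pi(g^{-1})v(\pi),w(\pi)\rangle_{H_\pi}\,d\nu_{G,\omega}(\pi)$.

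For part (1) I bound $C_{v,w}$ pointwise by an ordinary convolution. Because $|\omega|\equiv 1$, the display gives $|C_{v,w}(g)|\le\int_G |v(gx)|\,|w(x)|\,d\mu(x)$, and the substitution $y=gx$ together with unimodularity identifies this with $(|v|*\psi)(g)$, where $\psi(x)=|w(x^{-1})|$ satisfies $\psi\in L^2(G)$ and $\|\psi\|_2=\|w\|_2$. Since $v\in\mathcal F(\mathcal J^1)$ forces $v\in L^1(G)$, Young's inequality yields $\||v|*\psi\|_2\le\|v\|_1\|w\|_2<\infty$, so $C_{v,w}\in L^2(G)$ with the a priori bound $\|C_{v,w}\|_2\le\|v\|_1\|w\|_2$, which I will reuse as a continuity estimate.

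For part (2), with $C_{v_i,w_i}\in L^2(G)$ in hand, I apply the Plancherel isometry to write $\langle C_{v_1,w_1},C_{v_2,w_2}\rangle_{L^2(G)}=\langle \widehat{C_{v_1,w_1}},\widehat{C_{v_2,w_2}}\rangle$ on the spectral side, and then compute $\widehat{C_{v,w}}$. A short computation shows $\lambda_\omega(a)C_{v,w}=C_{\lambda_\omega(a)v,w}$, so $v\mapsto C_{v,w}$ is $\lambda_\omega$-equivariant and $\widehat{C_{v,w}}$ lies in the same spectral block (over $X$) as $\widehat v$. Pairing the fibrewise formula against $h\in\mathcal J^1$ and applying Fubini—justified because $\int_X|\langle\pi(g^{-1})v(\pi),w(\pi)\rangle|\,d\nu_{G,\omega}(\pi)\le\|v\|_1\,\nu_{G,\omega}(X)^{1/2}\|w\|_{H_X}$ is bounded uniformly in $g$ while $h\in L^1$—reduces the inner $G$-integral to the twisted Schur/Plancherel orthogonality of Theorem~\ref{tprojPthm}, identifying $\widehat{C_{v,w}}$ with the rank-one operator field $\pi\mapsto v(\pi)\otimes w(\pi)^*$ on $X$. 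Granting this, $\Tr\big( (v_1\otimes w_1^*)(v_2\otimes w_2^*)^* \big)=\langle v_1,v_2\rangle\langle w_2,w_1\rangle=\Tr(v_1 w_1^*w_2 v_2^*)$ turns the spectral pairing into \ref{eLem3.2}, with convergence secured by $\int_X\|v(\pi)\|^2\|w(\pi)\|^2\,d\nu_{G,\omega}(\pi)\le\|v\|_1^2\|w\|_2^2$.

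The main obstacle is exactly this identification of $\widehat{C_{v,w}}$: pushing the $G$-integration through the direct integral over $X$ and invoking the projective orthogonality relations while correctly tracking the $2$-cocycle factors (note $\pi(g^{-1})^*=\overline{\omega(g,g^{-1})}\,\pi(g)$) and the conjugate-representation bookkeeping, which cancel in the Hermitian pairing but must be handled with care. If a direct appeal to orthogonality is awkward, a safe alternative is to first prove the pairing identity $\langle C_{v,w},h\rangle=\int_X\langle\widehat{\overline h}(\pi)v(\pi),w(\pi)\rangle\,d\nu_{G,\omega}(\pi)$ for $h\in\mathcal J^1$, extend it to $h\in L^2(G)$ by continuity using the bound from part (1), specialise to the diagonal $\|C_{v,w}\|_2^2=\int_X\|v(\pi)\|^2\|w(\pi)\|^2\,d\nu_{G,\omega}(\pi)$, and then recover \ref{eLem3.2} by polarisation in $v$ and $w$.
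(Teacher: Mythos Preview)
Your Part (1) is essentially the paper's argument: both identify $C_{v,w}$ with the twisted convolution $f_v*_\omega f_w^*$ (in your notation, $v$ as a function on $G$ \emph{is} the paper's $f_v$, so ``$v\in\mathcal F(\mathcal J^1)$ forces $v\in L^1$'' is correct once that identification is unwound) and then invoke $L^1*L^2\subset L^2$ with the bound $\|C_{v,w}\|_2\le\|f_v\|_1\|f_w\|_2$.

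For Part (2) you take a different route. The paper never tries to name $\widehat{C_{v,w}}$; instead it approximates $w_1,w_2$ by sequences $w_{1,j},w_{2,k}$ whose preimages lie in $\mathcal J^1$, applies the Plancherel identity of Theorem~\ref{tprojPthm}(2) to the resulting $\mathcal J^1$ convolutions $f_{v_1}*f_{w_{1,j}}^*$ and $f_{v_2}*f_{w_{2,k}}^*$, and then passes to the limit by splitting the difference into three terms controlled by the continuity estimate from Part (1). Your plan, by contrast, is to identify $\widehat{C_{v,w}}(\pi)=v(\pi)\otimes w(\pi)^*$ on the spectral side and read off \eqref{eLem3.2} from the Plancherel isometry and the trace identity $\Tr\big((v_1\otimes w_1^*)(v_2\otimes w_2^*)^*\big)=\Tr(v_1w_1^*w_2v_2^*)$. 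This is conceptually cleaner and explains \emph{why} the formula looks the way it does.

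The catch is that your key identification is exactly where the analytic work hides. The multiplicativity $\widehat{f_v*_\omega f_w^*}(\pi)=\widehat{f_v}(\pi)\,\widehat{f_w}(\pi)^*$ is immediate when $f_w\in\mathcal J^1$, but for general $f_w\in L^2$ one must approximate $f_w$ by $\mathcal J^1$ functions and pass to the limit on the spectral side---which is precisely the paper's manoeuvre carried out on the other side of the Fourier transform. Your first suggested justification (``projective orthogonality relations'') is too vague for non--square-integrable $\pi$, where individual matrix coefficients $\langle\pi(g^{-1})v(\pi),w(\pi)\rangle$ are not in $L^2(G)$; your second (pair against $h\in\mathcal J^1$, extend by density, polarise) is workable, but extending the right-hand side $\int_X\langle\widehat{\overline h}(\pi)v(\pi),w(\pi)\rangle\,d\nu$ continuously to $h\in L^2$ again needs an a priori bound on $\int_X\|v(\pi)\|^2\|w(\pi)\|^2\,d\nu$, which you only obtain \emph{after} the identification. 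So either way you end up doing an approximation in $w$; the paper simply does it up front and avoids naming the Fourier transform of $C_{v,w}$ at all.
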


\begin{proof}
For simplicity, we will write $*$ for the $\omega$-convolution $*_{\omega}$ given in \ref{etwinv} and write $\lambda$ for $\lambda_{\omega}$.  

We let $f_v,f_w\in L^2(G)$ be the inverse image of $v,w$ under the Fourier transform, i.e., $\mathcal{F}(f_v)=v$ and $\mathcal{F}(f_w)=w$. 
As $f_v\in \mathcal{J}^1$ by assumption,  
we have
\begin{equation*}
\begin{aligned}
C_{v,w}(g)&=\langle \lambda_{\omega,X}(g^{-1})v,w\rangle_{H_X}=\langle\lambda_{\omega}(g^{-1})f_v,f_w\rangle_{L^2(G)}\\
&=\int_{G}\omega(x^{-1},g^{-1})f_v(gx)\overline{f_{w}(x)}dx=\int_{G}\omega(x^{-1},g^{-1})f_v(gx)f_{w}^{*}(x^{-1})dx\\
&=(f_{v}*f_{w}^{*})(g),
\end{aligned}
\end{equation*}
where $f^{*}(g)=\overline{f(g^{-1})}$.
This shows that $C_{v,w}(g)\in L^2(G)$. 

For the equality, we let $f_{v_1},f_{v_2},f_{w_1},f_{w_2}\in L^2(G)$ be the Fourier inverse image of $v_1,v_2,w_1,w_2\in H_X$ such that $f_{v_1},f_{v_2}\in \mathcal{J}^1$ by assumption. 
Let $\{f_{w_{1,j}}\}_{j\geq 1},\{f_{w_{2,k}}\}_{k\geq 1}$ be sequences in $\mathcal{J}^1$ such that $\lim_{j\to\infty}\|f_{w_{1,j}}-f_{w_1}\|_{2}=0$ and $\lim_{k\to\infty}\|f_{w_{2,k}}-f_{w_2}\|_{2}=0$. 
We also let $\{w_{1,j}\}_{j\geq 1},\{w_{2,k}\}_{k\geq 1}$ be the associated Fourier transformations of these functions.
Please note that $w_{1,j}(\pi),w_{2,k}(\pi)$ is a Hilbert-Schmidt operator for $\nu_{G,\omega}$-almost every $\pi$. 

We first observe that
\begin{equation}
\begin{aligned}
  \lim_{k\to \infty}\|C_{f_{v_i},f_{w_{i,k}}}-C_{f_{v_i},f_{w_{i}}}\|_{2}&=\lim_{k\to \infty}\|\lambda(f_{v_i})(f_{w_{i,k}}-f_{w_{i}})\|_{2}\\
  &\leq \lim_{k\to \infty}\|\lambda(f_{v_i})\|\cdot \|(f_{w_{i,k}}-f_{w_{i}})\|_{2}\\
  &\leq \lim_{k\to \infty}\|f_{v_i}\|_{1}\cdot \|(f_{w_{i,k}}-f_{w_{i}})\|_{2}=0,
\end{aligned}
\end{equation}
for $i=1,2$. 
Thus we obtain
\begin{equation}\label{eLem3.2-1}
\begin{aligned}
&\langle C_{v_1,w_1},C_{v_2,w_2}\rangle_{L^2(G)}\\=&\langle f_{v_1} * f_{w_1}^{*},f_{v_2} * f_{w_2}^{*}\rangle_{L^2(G)}=\lim\limits_{j,k\to\infty }\langle f_{v_1} * f_{w_{1,j}}^{*},f_{v_2} * f_{w_{2,k}}^{*}\rangle_{L^2(G)}\\
=&\lim\limits_{j,k\to\infty }\int_{\Pi(G,\omega)}\Tr\left(v_1(\pi)w_{1,j}^{*}(\pi)w_{2,k}(\pi)v_2^{*}(\pi)\right)d\nu_{G,\omega}(\pi)
\end{aligned}
\end{equation}
Since $w_{1,j}^{*}(\pi)w_{2,k}$ a trace class operator, $v_1(\pi)w_{1,j}^{*}(\pi)w_{2,j}(\pi)$ is also trace class. 
Thus $\Tr(v_1(\pi)w_{1,j}^{*}(\pi)w_{2,k}(\pi)v_2^{*}(\pi))=\Tr(v_2^{*}(\pi)v_1(\pi)w_{1,j}^{*}(\pi)w_{2,k}(\pi))$ and Equation \ref{eLem3.2-1} equals to
\begin{equation}\label{eLem3.2_2}
\begin{aligned}
\lim\limits_{j,k\to\infty }\int_{\Pi(G,\omega)}\Tr\left(v_2^{*}(\pi)v_1(\pi)w_{1,j}^{*}(\pi)w_{2,k}(\pi)\right)d\nu_{G,\omega}(\pi),
\end{aligned}
\end{equation}
which is the sum of the following three terms
\begin{enumerate}
    \item $ \lim\limits_{j,k\to\infty }\int_{\Pi(G,\omega)}\Tr\left(v_2^{*}(\pi)v_1(\pi)\left(w_{1,j}^{*}(\pi)w_{2,k}(\pi)-w_{1,j}^{*}(\pi)w_{2}(\pi)\right) \right)d\nu_{G,\omega}(\pi)$;
    \item $ \lim\limits_{j\to\infty }\int_{\Pi(G,\omega)}\Tr\left(v_2^{*}(\pi)v_1(\pi)\left(w_{1,j}^{*}(\pi)w_{2}(\pi)-w_{1}^{*}(\pi)w_{2}(\pi)\right) \right)d\nu_{G,\omega}(\pi)$;
    \item $ \int_{\Pi(G,\omega)}\Tr\left(v_2^{*}(\pi)v_1(\pi)w_{1}^{*}(\pi)w_{2}(\pi) \right)d\nu_{G,\omega}(\pi)$.
\end{enumerate}

Note that the last term above is exactly the right side of the desired equality since all $v_i,w_i$ have their support in $X$.  
It then suffices to show that the first two are trivial. 
For the first one, we have
\begin{equation}
\begin{aligned}
&\lim\limits_{j,k\to\infty }\int_{\Pi(G,\omega)}\Tr\left(v_2^{*}(\pi)v_1(\pi)\left(w_{1,j}^{*}(\pi)w_{2,k}(\pi)-w_{1,j}^{*}(\pi)w_{2}(\pi)\right) \right)d\nu_{G,\omega}(\pi)\\
=&\lim\limits_{j,k\to\infty }\int_{\Pi(G,\omega)}\Tr\left(v_2^{*}(\pi)v_1(\pi)w_{1,j}^{*}(\pi)\cdot \left( w_{2,k}(\pi)-w_{2}(\pi) \right) \right)d\nu_{G,\omega}(\pi)\\
=&\lim\limits_{j,k\to\infty }\langle f_{v_2}^{*}*f_{v_1}*f_{w_{1,j}},f_{w_{2,k}}-f_{w_2}\rangle_{L^2(G)}=0,
\end{aligned}
\end{equation}
which follows the fact that
\begin{equation*}
\begin{aligned}
    &\lim\limits_{j,k\to\infty }|\langle f_{v_2}^{*}*f_{v_1}*f_{w_{1,j}},f_{w_{2,k}}-f_{w_2}\rangle_{L^2(G)}|\\
    \leq &\|f_{v_1}\|_{1}\cdot \|f_{v_2}\|_{1}\cdot \lim\limits_{j\to\infty }\|f_{w_{1,j}}\|_{2}\cdot \lim\limits_{k\to\infty }\|f_{w_{2,k}}-f_{w_2}\|_{2}=0. 
\end{aligned}
\end{equation*}

For the second term, we let $h\in L^2(G)$ such that its Fourier transform at each $\pi$ is $w_{2}(\pi)v_2^{*}(\pi)v_1(\pi)$, i.e., $\mathcal{F}(h)=w_2 v_2^{*}v_1$. 
Then we have
\begin{equation}
\begin{aligned}
&\lim\limits_{j\to\infty }\int_{\Pi(G,\omega)}\Tr\left(v_2^{*}(\pi)v_1(\pi)\left(w_{1,j}^{*}(\pi)w_{2}(\pi)-w_{1}^{*}(\pi)w_{2}(\pi)\right) \right)d\nu_{G,\omega}(\pi)\\
    =&\lim\limits_{j\to\infty }\int_{\Pi(G,\omega)}\Tr\left(w_{2}(\pi)v_2^{*}(\pi)v_1(\pi)\left(w_{1,j}^{*}(\pi)-w_{1}^{*}(\pi)\right) \right)d\nu_{G,\omega}(\pi)\\
    =&\lim\limits_{j\to\infty }\langle h,f_{w_{1,j}}-f_{w_{1}}\rangle_{L^2(G)}=0,
\end{aligned}
\end{equation}
by the assumption that $\lim_{j\to\infty}\|f_{w_{1,j}}-f_{w_1}\|_{2}=0$. 
\end{proof}

Now we let $\Gamma$ be a discrete subgroup of $G$ which a lattice, i.e., $\mu(\Gamma/G)<\infty$.
The measure $\mu(\Gamma/G)$ is called {\it covolume} of $\Gamma$ \footnote{Note the covolume depends on the Haar measure $\mu$.}. 
Let $D\subset G$ be a {\it fundamental domain} for $\Gamma$, i.e., $\mu(G\backslash \cup_{\gamma\in\Gamma}\gamma D)=0$ and 
$\mu(\gamma_1 D\cap \gamma_2 D)=0$ if $\gamma_1\neq \gamma_2$ in $\Gamma$.


There is a natural isomorphism $L^2(G)\cong l^2(\Gamma)\otimes L^2(D,\mu)$ given by
\begin{center}
$\phi\mapsto \sum_{\gamma\in\Gamma}\delta_{\gamma}\otimes \phi_{\gamma}$ with $\phi_{\gamma}(z)=\phi(\gamma\cdot z)$,
\end{center}
where $z\in D$ and $\gamma\in \Gamma$.
Let $\lambda_{\omega,G}(\gamma)$ denotes the $\omega$-projective representation of $\Gamma$ on $L^2(G)$. 
We can show that
\begin{center}
    $\lambda_{\omega,G}(\gamma)=\lambda_{\omega}(\gamma)\otimes \id_{L^2(D)}$
\end{center}
with respect to this decomposition. 

Let $\{d_k\}$ be an orthonormal basis of $L^2(D)$. 
\begin{lemma}\label{ldimsumnorm}
With the assumption above, we have
\begin{center}
    $\dim_{\mathcal{L}(\Gamma,\omega)}(H_X)=\sum_{k\geq 1}\|Pd_k\|_{H_X}^2$.
\end{center}
\end{lemma}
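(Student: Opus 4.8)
The plan is to realize $\dim_{\mathcal{L}(\Gamma,\omega)}(H_X)$ as the canonical trace of the projection $P=P_X$ on the commutant, and then evaluate that trace against the vectors $d_k$. Write $M=\mathcal{L}(\Gamma,\omega)$. Since $P$ is the orthogonal projection of $L^2(G)$ onto the closed subspace $H_X$ and is $(G,\omega)$-equivariant, it commutes in particular with $\lambda_{\omega,G}(\gamma)$ for every $\gamma\in\Gamma$. Under the identification $L^2(G)\cong l^2(\Gamma)\otimes L^2(D)$ we have $\lambda_{\omega,G}(\gamma)=\lambda_\omega(\gamma)\otimes\id_{L^2(D)}$, so $M$ acts as $M\otimes\id_{L^2(D)}$ and $P$ lies in its commutant $M'=\mathcal{R}(\Gamma,\overline{\omega})\otimes B(L^2(D))$. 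Because $L^2(M)=l^2(\Gamma)$ and $L^2(D)\cong l^2(\mathbb{N})$, the module $L^2(G)$ is exactly the standard amplification $L^2(M)\otimes l^2(\mathbb{N})$, with $H_X\cong P\bigl(L^2(M)\otimes l^2(\mathbb{N})\bigr)$ as $M$-modules. Hence, by the definition of the von Neumann dimension,
\[
\dim_M(H_X)=\Tr_{M'}(P)=(\tr_{M'}\otimes\Tr)(P).
\]

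Next I would show that $\delta_e\in l^2(\Gamma)=L^2(M)$ implements the canonical trace $\tr_{M'}$ on $\mathcal{R}(\Gamma,\overline{\omega})$ as a vector state. A direct computation from the definition of $\rho_\omega$ gives $\rho_\omega(\gamma)\delta_e=\omega(\gamma^{-1},\gamma)\delta_{\gamma^{-1}}$, so $\langle\rho_\omega(\gamma)\delta_e,\delta_e\rangle$ vanishes for $\gamma\neq e$ and equals $1$ for $\gamma=e$, which agrees with $\tr_{M'}$ on the generators; since both functionals are normal, they coincide on all of $\mathcal{R}(\Gamma,\overline{\omega})$ (on positive elements, where the two agree up to complex conjugation, this is immediate, and positive elements determine a normal trace).

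Then the key step is to convert the tensor-product trace into a countable sum of vector states. For the positive operator $P\in M'=\mathcal{R}(\Gamma,\overline{\omega})\otimes B(L^2(D))$, I introduce the slice maps $P_k\in\mathcal{R}(\Gamma,\overline{\omega})$ determined by $\langle P_k a,b\rangle=\langle P(a\otimes d_k),b\otimes d_k\rangle$, and use the normality of $\tr_{M'}$ together with $\Tr(A)=\sum_k\langle Ad_k,d_k\rangle$ to obtain
\[
(\tr_{M'}\otimes\Tr)(P)=\sum_{k\geq 1}\tr_{M'}(P_k)=\sum_{k\geq 1}\langle P(\delta_e\otimes d_k),\delta_e\otimes d_k\rangle .
\]

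Finally, under $L^2(G)\cong l^2(\Gamma)\otimes L^2(D)$ the vector $\delta_e\otimes d_k$ corresponds to $d_k$ extended by zero outside the fundamental domain $D$, i.e. to $d_k\in L^2(D)\subset L^2(G)$; and since $P$ is a projection, $\langle P(\delta_e\otimes d_k),\delta_e\otimes d_k\rangle=\|P(\delta_e\otimes d_k)\|^2=\|Pd_k\|_{H_X}^2$. Combining the two displays yields $\dim_{\mathcal{L}(\Gamma,\omega)}(H_X)=\sum_{k\geq 1}\|Pd_k\|_{H_X}^2$. I expect the third step to be the main obstacle: justifying that for a general (non-elementary) positive operator $P$ in the tensor-product von Neumann algebra the trace $(\tr_{M'}\otimes\Tr)(P)$ equals the sum of the vector states at $\delta_e\otimes d_k$ rests on the normality of the traces and on the verification that $\delta_e$ is a trace vector for the commutant, rather than on any simple algebraic identity for simple tensors.
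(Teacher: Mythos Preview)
Your proposal is correct and follows essentially the same route as the paper: identify $H_X$ with $P(L^2(M)\otimes l^2(\mathbb N))$ via $L^2(G)\cong l^2(\Gamma)\otimes L^2(D)$, recognize $\dim_M H_X=(\tr_{M'}\otimes\Tr)(P)$, and then evaluate this trace as $\sum_k\langle P(\delta_e\otimes d_k),\delta_e\otimes d_k\rangle=\sum_k\|Pd_k\|^2$. The only organizational difference is that the paper packages the last step via the projection $Q$ of $L^2(G)$ onto $\mathbb C\delta_e\otimes L^2(D)$, checks $\Tr_{M'}(x)=\Tr_{L^2(G)}(QxQ)$ on finite sums $\sum_\gamma\rho_{\overline\omega}(\gamma)\otimes a_\gamma$ with $a_\gamma$ of finite rank, and then passes to $P$ by normality, whereas you reach the same conclusion through slice maps and the observation that $\delta_e$ is a trace vector for $\mathcal R(\Gamma,\overline\omega)$.
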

\begin{proof}
Let $u$ be the inclusion $H_X\to L^2(G)$ and $M=\mathcal{L}(\Gamma,\omega)$.  
We have $u^*u={\rm id}_{H_X}$ and $uu^*=P_X$. 
Note $L^2(G)\cong L^2(M)\otimes L^2(D,dg)$, where $L^2(M)\cong l^2(\Gamma)$ as an $M$-module and $L^2(D,dg)$ is regarded as a trivial $M$-module. 
Thus, by definition (see Lemma \ref{ltrdim}), we know
\begin{center}
$\dim_M(H_X)=\Tr_{M'\cap B(L^2(G))}(P_X)$, 
\end{center}
where $M'\cap B(L^2(G))=\{T\in B(L^2(G))|Tx=xT,~\forall x\in M\}$, the commutant of $M$ on $L^2(G)$.
On the right-hand side, 
\begin{center}
 $\Tr_{M'\cap B(L^2(G))}=\tr_{M'\cap B(L^2(M))}\otimes \Tr_{L^2(D)}$    
\end{center}
is the natural trace on $M'$. 

The commutant $M'$ is generated by the finite sums of the form
\begin{center}
$x=\sum_{\gamma\in \Gamma}\rho_{\overline{\omega}}(\gamma)\otimes a_{\gamma}$,
\end{center}
where $\rho_{\overline{\omega}}(\gamma)=J\lambda_{\omega}(\gamma)J\in M'\cap L^2(M)$ (here $J\colon L^2(M)\to L^2(M)$ is the conjugate linear isometry extended from $x\mapsto x^*$) and $a_{\gamma}$ is a finite rank operator in $B(L^2(D))$. 

Let $d_m^*\otimes d_n$ denotes the operator $\xi \mapsto \langle d_m,\xi\rangle\cdot d_n$ on $L^2(D)$. 
Then each $a_{\gamma}$ can be written as $a_{\gamma}=\sum_{m,n\geq 1}a_{\gamma,m,n}d_m^*\otimes d_n$ with $a_{\gamma,m,n}\in \mathbb{C}$ and all but finite many terms of $a_{m,n}$ are trivial. 
Thus we obtain
\begin{center}
$\Tr_{M'}(\rho_{\overline{\omega}}(\gamma)\otimes a_{\gamma})=\tr_M(\lambda_{\omega}(\gamma))\sum_{m\geq 1}a_{\gamma,m,m}=\delta_{e}(\lambda)\Tr_{L^2(D)}(a_\gamma)$.
\end{center}
This is equivalent to say
\begin{center}
$\Tr_{M'}(x)=\Tr_{L^2(D)}(a_e)$. 
\end{center}

Let $Q$ be the projection of $L^2(G)$ onto $L^2(D)\cong \mathbb{C}\delta_e\otimes L^2(D)$. 
Then $\Tr_{L^2(D)}(y)=\Tr_{L^2(G)}(QyQ)$ for $y\in B(L^2(D))$. 
We have
\begin{equation}\label{eTrQxQ}
    \Tr_{M'}(x)=\Tr_{L^2(D)}(a_e)=\Tr_{L^2(G)}(Qa_eQ)=\Tr_{L^2(G)}(QxQ)
\end{equation}
As $P_X$ is a strong limit of elements that have the same form as $x$ above and the traces are normal, the formula (\ref{eTrQxQ}) holds for $x=P_X$. 
Thus we obtain
\begin{equation*}
\begin{aligned}
\dim_M(H_X)&=\Tr_{M'}(P)=\Tr_{L^2(G,dg)}(QPQ)\\
&=\sum\nolimits_{k\geq 1}\langle QPQd_k,d_k\rangle_{L^2(G)}=\sum\nolimits_{k\geq 1}\langle Qd_k,PQd_k\rangle_{L^2(G)}\\
&=\sum\nolimits_{k\geq 1}\langle d_k,Pd_k\rangle_{L^2(G)}=\sum\nolimits_{k\geq 1}\langle Pd_k,Pd_k\rangle_{L^2(G)}\\
&=\sum\nolimits_{k\geq 1}\langle Pd_k,Pd_k\rangle_{H_X}=\sum\nolimits_{k\geq 1}\|Pd_k\|_{H_X}^2
\end{aligned}
\end{equation*}
\end{proof}

Let $\omega$ be a 2-cocycle of $G$ and $\nu_{G,\omega}$ be the Plancherel measure on $\Pi(G,\omega)$, the $\omega$-projective irreducible representations of $G$ (see Theorem \ref{tprojPthm}). 
Recall that for $X\subset\Pi(G,\omega)$ such that $\nu_{G,\omega}(X)<\infty$,  
\begin{center}
    $H_X=\int_X^{\oplus} H_{\pi}d\nu_{G,\omega}(\pi)$.
\end{center}
\begin{theorem}\label{tdimmeas}
Let $\Gamma$ be a lattice of $G$.   
We have 
\begin{equation}\label{eASf1}
   \dim_{\mathcal{L}(\Gamma,\omega)}H_X=\mu(\Gamma/G)\cdot \nu_{G,\omega}(X),
\end{equation}
or equivalently, 
\begin{equation}\label{eASf1a}
  d_{\Gamma,\omega}(H_{\pi})=\mu(\Gamma/G)\cdot d\nu_{G,\omega}(\pi),  
\end{equation}
\end{theorem}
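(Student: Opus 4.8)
The plan is to reduce the statement to Lemma \ref{ldimsumnorm} and then evaluate the resulting sum one Plancherel fibre at a time, the fibrewise computation being nothing more than Parseval's identity on the fundamental domain together with the unitarity of $\pi$. Write $M=\mathcal{L}(\Gamma,\omega)$, $P=P_X$ and $\nu=\nu_{G,\omega}$; fix a fundamental domain $D$ for $\Gamma$ (so $\mu(D)=\mu(\Gamma/G)$) with an orthonormal basis $\{d_k\}$ of $L^2(D)$, viewed as functions on $G$ supported on $D$, and fix an orthonormal basis $\{e_n(\pi)\}$ of each $H_\pi$. By Lemma \ref{ldimsumnorm} it suffices to show $\sum_{k\ge 1}\|Pd_k\|_{H_X}^2=\mu(D)\,\nu(X)$.

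First I would identify $Pd_k$ on the Plancherel side. Since $D$ has finite measure, each $d_k$ lies in $\mathcal{J}^1=L^1(G)\cap L^2(G)$, so $\widehat{d_k}(\pi)=\int_D d_k(g)\pi(g^{-1})\dd\mu(g)$ is defined by Theorem \ref{tprojPthm}. Using the description of $P$ in \ref{eHXsubspace} (extended off the finitely-supported dense subspace by continuity of the projection) and the isometric identification \ref{eHXsub}, the image $Pd_k$ corresponds under $\mathcal{F}$ to the ``first column'' $\mathbbm{1}_X(\pi)\,\widehat{d_k}(\pi)e_1(\pi)$, whence
\[
\|Pd_k\|_{H_X}^2=\int_X\bigl\|\widehat{d_k}(\pi)e_1(\pi)\bigr\|_{H_\pi}^2\,\dd\nu(\pi).
\]
Summing over $k$ and interchanging the sum with the integral (permissible by Tonelli, as the integrand is nonnegative) reduces the whole problem to proving that $\sum_k\|\widehat{d_k}(\pi)e_1(\pi)\|_{H_\pi}^2=\mu(D)$ for $\nu$-almost every $\pi$.

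To do this I would fix $\pi$ and test against $\{e_n(\pi)\}$. Setting $\psi_{\pi,n}(g)=\langle e_n(\pi),\pi(g^{-1})e_1(\pi)\rangle_{H_\pi}$, which lies in $L^2(D)$ because $\pi$ is unitary and $\mu(D)<\infty$, one has $\langle\widehat{d_k}(\pi)e_1(\pi),e_n(\pi)\rangle=\langle d_k,\psi_{\pi,n}\rangle_{L^2(D)}$. Applying Parseval to the orthonormal basis $\{d_k\}$ of $L^2(D)$ and then the unitarity of $\pi(g^{-1})$ gives
\begin{align*}
\sum_k\bigl\|\widehat{d_k}(\pi)e_1(\pi)\bigr\|_{H_\pi}^2
&=\sum_n\|\psi_{\pi,n}\|_{L^2(D)}^2
=\int_D\sum_n\bigl|\langle e_n(\pi),\pi(g^{-1})e_1(\pi)\rangle\bigr|^2\,\dd\mu(g)\\
&=\int_D\bigl\|\pi(g^{-1})e_1(\pi)\bigr\|^2\,\dd\mu(g)=\int_D 1\,\dd\mu(g)=\mu(D).
\end{align*}
Substituting back yields $\dim_M H_X=\mu(D)\,\nu(X)=\mu(\Gamma/G)\,\nu_{G,\omega}(X)$, which is \ref{eASf1}; the equivalent density form \ref{eASf1a} is then immediate from the definition of the von Neumann density, since the identity holds verbatim for every measurable $Y\subseteq\Pi(G,\omega)$ of finite Plancherel measure, with defining constant $C=\mu(\Gamma/G)$.

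I expect the genuine difficulty to lie not in the final fibrewise count---which is pure unitarity and Parseval, and is visibly insensitive to the cocycle $\omega$---but in the measure-theoretic bookkeeping that precedes it: justifying the fibrewise identity $\widehat{Pd_k}(\pi)=\mathbbm{1}_X(\pi)\widehat{d_k}(\pi)e_1(\pi)$ for an arbitrary $d_k$ (i.e.\ extending \ref{eHXsubspace} from the finitely-supported dense subspace to all of $L^2(G)$), verifying measurability of the field $\pi\mapsto\sum_k\|\widehat{d_k}(\pi)e_1(\pi)\|^2$, and legitimizing the interchange of summation and integration. These are precisely the ingredients that Theorem \ref{tprojPthm} and the orthogonality relations of Lemma \ref{lmxcoef} are designed to provide, so I would lean on them to discharge the technical steps.
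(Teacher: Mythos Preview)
Your argument is correct, and it follows a genuinely different route from the paper's. Both proofs start from Lemma~\ref{ldimsumnorm}, reducing the statement to $\sum_k\|Pd_k\|_{H_X}^2=\mu(D)\,\nu_{G,\omega}(X)$, but they diverge in how this sum is evaluated. The paper introduces an auxiliary unit vector $\eta\in H_X$ with $\|\eta(\pi)\|^2=\nu_{G,\omega}(X)^{-1}$ almost everywhere, expands $1=\|\lambda_\omega(g)\eta\|^2$ in the orthonormal basis $\{\lambda_\omega(\gamma)^{-1}d_k\}$ of $L^2(G)$, integrates over $D$, and then invokes the matrix-coefficient orthogonality relations of Lemma~\ref{lmxcoef} to identify the result with $\nu_{G,\omega}(X)^{-1}\sum_k\|Pd_k\|^2$. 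You instead go straight through the Plancherel isomorphism: identify $Pd_k$ fibrewise as $\mathbbm{1}_X(\pi)\widehat{d_k}(\pi)e_1(\pi)$, swap sum and integral by Tonelli, and finish with Parseval on $L^2(D)$ plus unitarity of $\pi(g^{-1})$. Your route is more elementary in that it avoids both the auxiliary vector $\eta$ and Lemma~\ref{lmxcoef} altogether (despite your closing remark, you never actually use it), at the cost of working directly with the fibrewise operators $\widehat{d_k}(\pi)$; the paper's route keeps the Fourier analysis packaged inside Lemma~\ref{lmxcoef} and phrases everything in terms of global matrix coefficients. The technical points you flag---extending \eqref{eHXsubspace} by continuity, measurability of the fibrewise norms, and the Tonelli swap---are real but routine, and your identification $d_k\in\mathcal{J}^1$ (from $\mu(D)<\infty$) is exactly the observation the paper also makes.
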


\begin{proof}
We take a vector $\eta=\int_{X}^{\oplus}\eta(\pi)d\nu_{G,\omega}(\pi)$ in $H_X$ such that $\|\eta(\pi)\|_{H_{\pi}}^2=\frac{1}{\nu_{G,\omega}(X)}$ almost everywhere in $X$. 
Then $\eta$ is a unit vector in $H_X$ and also in $L^2(G)$. 

As $\mu(D)<\infty$, we have $L^2(D)\subset L^1(D)$.
We may take the basis $\{d_k\}_{k\geq 1}$ from the functions in $\mathcal{J}^1=L^1(G)\cap L^2(G)$, whose supports are contained in $D$. 

Observe $\{\delta_{\gamma}\otimes d_k\}_{\gamma\in \Gamma,n\geq 1}$ is an orthogonal basis of $L^2(G,\mu)$ via the isomorphism $L^2(G)\cong l^2(\Gamma)\otimes L^2(D,\mu)$.  
We identify $\delta_{\gamma}\otimes d_k$ with $\rho(\gamma)d_k$ and $\lambda(\gamma)^{-1}d_k$ for $k\geq 1$ and $\gamma\in \Gamma$. 
Please note that $\{\lambda_{\omega}(\gamma)^{-1}d_k|\gamma\in \Gamma,k\geq 1\}$ gives an orthonormal basis of $L^2(G)$. 
Hence, for each $g\in G$, we have
\begin{center}
$1=\|\lambda_{\omega,X}(g)\eta\|_{H_X}^2=\|\lambda_{\omega}(g)\eta\|_{L^2(G)}^2=\sum_{\gamma\in \Gamma,k\geq 1}|\langle \lambda_{\omega}(g)\eta,\lambda_{\omega}(\gamma)d_k\rangle_{L^2(G)}|^2$. 
\end{center}
Consequently, as $P_X$ commutes with the $G$-actions, we obtain: 
\begin{equation*}
\begin{aligned}
{\rm covol}(\Gamma)&=\int_{D}1d\mu(g)=\int_{D}\sum_{\gamma\in \Gamma,k\geq 1}|\langle \lambda_{\omega}(\gamma)^{*}\lambda_{\omega}(g)^{*}\eta,d_k\rangle|^2d\mu(g)\\
&=\sum_{k\geq 1}\int_{G}|\langle P\lambda_{\omega}(g)^{*}\eta,d_k\rangle_{L^2(G)}|^2d\mu(g)=\sum_{k\geq 1}\int_{G}|\langle \lambda_{\omega}(g)^{*}\eta,Pd_k\rangle_{H_X}|^2d\mu(g)\\
&=\sum_{k\geq 1}\int_{G}|\langle \lambda_{\omega}(g)^*Pd_k,\eta\rangle|^2d\mu(g)=\sum_{k\geq 1}\langle C_{Pd_k,\eta},C_{Pd_k,\eta}\rangle_{L^2(G)}\\
&=\sum_{k\geq 1}\int_X \Tr( (Pd_k)(\pi)\otimes \eta(\pi)^*\cdot (\eta(\pi)\otimes(Pd_k)(\pi)^*)d\nu_{G,\omega}(\pi)\\
&=\sum_{k\geq 1}\int_X \langle  (Pd_k)(\pi)\otimes \eta(\pi)^*,(Pd_n)(\pi)\otimes \eta(\pi)^*\rangle_{H_{\pi}\otimes H_{\pi}^*}d\nu_{G,\omega}(\pi)\\
&=\sum_{k\geq 1}\int_{X}\|\eta(\pi)\|_{H_\pi}^2\cdot \|(Pd_k)(\pi)\|_{H_{\pi}}^2d\nu_{G,\omega}(\pi)\\
&=\frac{1}{\nu_{G,\omega}(X)}\sum_{n\geq 1}\|Pd_k\|_{H_X}^2.
\end{aligned}
\end{equation*}
Here we may apply Lemma \ref{lmxcoef} in the third line above since all $d_k$ are functions in $\mathcal{J}^1$ with supports contained in $D$.  
This is $\dim_{\mathcal{L}(\Gamma,\omega)}(H_X)\cdot \nu(X)^{-1}$ by Lemma \ref{ldimsumnorm}. 
Hence we obtain $\dim_{\mathcal{L}(\Gamma,\omega)}H_X=\mu(\Gamma/G)\cdot \nu_{G,\omega}(X)$.  
\end{proof}

We should mention that the left side of Equation \ref{eASf1} is independent of the choice of the Haar measure $\mu$ in $G$: if $\mu'=c\cdot \mu$ is another Haar measure on $G$ for some $c>0$, the covolumes are related by  $\mu'(\Gamma/G)=c\cdot\mu(\Gamma/G)$ while $\nu'_{G,\omega}=c^{-1} \cdot\nu_{G,\omega}$ for the associated Plancherel measures. 
Thus the dependencies cancel out. 

\begin{remark}
Theorem \ref{tdimmeas} reduces the following special cases: 
\begin{enumerate}
\item  if $\omega$ is trivial and $X=\{\pi\}$ is a discrete series representation, it reduces to the original Atiyah-Schmid formula (see \cite[Theorem 3.3.2]{GHJ}). 
\item if $X=\{\pi\}$ is a discrete series representation, it reduces to \cite[Theorem 4.3]{Enstad22}. 
\item if $\omega$ is trivial, it reduces to the result in \cite[Theorem 4.1]{Y22} (see also a relevant approach by Peterson and Valette \cite{PetVlt14}). 
\end{enumerate}
\end{remark}

\section{The Atiyah-Schmid formula for reductive groups}\label{sASred}

Suppose $\mathbf{G}$ is a reductive group defined over $\mathbb{R}$ and $G=\mathbf{G}(\mathbb{R})$ is the real points. 
In general, the discrete group $\Gamma=\mathbf{G}(\mathbb{Z})$ is not a lattice of $G$, i.e., $\mu_G(\Gamma/G)=\infty$ (unless $G$ is semi-simple). 
We will give the Atiyah-Schmid formula for this case which generalizes the original one for semisimple Lie groups with their arithmetic subgroups.

We let $\mathbf{Z}$ be the center of $\mathbf{G}$ and $Z=\mathbf{Z}(\mathbb{R})$.
We let $\overline{G}=G/Z$, $\overline{\Gamma}=\Gamma/(Z\cap \Gamma)$ and $\widehat{G}$ be the unitary dual of $G$ which is equipped with the ordinary Plancherel measure $\nu_{G}$.  
  
\begin{theorem}\label{tASred}
Let $X\subset\widehat{G}$ such that $\nu_{G}(X)<\infty$ and $H_X=\int_X^{\oplus} H_{\pi}d\nu_{G}(\pi)$. 
We have
\begin{equation*}
    \dim_{\mathcal{L}(\mathbf{G}(\mathbb{Z}))}H_X=\frac{\mu_{\overline{G}}(\overline{\Gamma}/\overline{G})}{|Z\cap \Gamma|}\cdot \nu_{G}(X), 
\end{equation*}
or equivalently, $d_{\mathbf{G}(\mathbb{Z})}(H_{\pi})=\frac{\mu_{\overline{G}}(\overline{\Gamma}/\overline{G})}{|Z\cap \Gamma|}\cdot d\nu_{G}(\pi)$. 
\end{theorem}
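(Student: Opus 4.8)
The plan is to reduce the statement to the projective Atiyah--Schmid formula of Theorem \ref{tdimmeas} on the semisimple quotient $\overline{G}=G/Z$, where $\overline{\Gamma}$ is a lattice, by slicing $L^2(G)$ according to the central character. Two mechanisms drive the argument. First, an irreducible tempered $\pi\in\widehat{G}$ restricts on $Z$ to a unitary character $\chi$ and thereby descends to an $\omega_\chi$-projective representation of $\overline{G}$, where $\omega_\chi\in Z^2(\overline{G},\mathbb{T})$ is obtained by pushing the extension class of $1\to Z\to G\to\overline{G}\to 1$ along $\chi\colon Z\to\mathbb{T}$; correspondingly the Plancherel measure disintegrates over central characters. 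Second, the finite central subgroup $F=Z\cap\Gamma$ will be responsible for the denominator $|Z\cap\Gamma|$.

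First I would disintegrate the left regular representation over the unitary dual of the center,
\[
L^2(G)=\int_{\widehat{Z}}^{\oplus}L^2(G)_\chi\,d\chi,
\]
where $L^2(G)_\chi$ is the space of $L^2$-sections of the line bundle on $\overline{G}$ attached to $\chi$ and $d\chi$ is the dual Haar measure on $\widehat{Z}$. The left $\Gamma$-action commutes with the right $Z$-action, so it preserves this decomposition; on each fiber $L^2(G)_\chi$ the subgroup $F=Z\cap\Gamma$ acts by the scalar character $\psi=\chi|_F$, and hence $\lambda(\Gamma)$ acts through the twisted algebra $\mathcal{L}(\overline{\Gamma},\omega_\chi)$, with $L^2(G)_\chi$ carrying the $\omega_\chi$-projective left regular representation of $\overline{G}$ restricted to the lattice $\overline{\Gamma}$. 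Here one uses that $\omega_\chi|_{\overline{\Gamma}}$ is cohomologous to the cocycle of the central extension $1\to F\to\Gamma\to\overline{\Gamma}\to 1$ determined by $\psi$, by naturality of restriction of extension classes. Since $\overline{G}$ is semisimple it is unimodular and type I, so Theorem \ref{tdimmeas} applies to $(\overline{G},\overline{\Gamma},\omega_\chi)$.

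The key numerical input is a dimension-scaling relation for the central extension $1\to F\to\Gamma\to\overline{\Gamma}\to 1$: for any $\mathcal{L}(\Gamma)$-module $H$ on which $F$ acts by a fixed character $\psi$, one has $\dim_{\mathcal{L}(\Gamma)}H=|F|^{-1}\dim_{\mathcal{L}(\overline{\Gamma},\omega_\psi)}H$. This holds because the $\psi$-isotypic projection $p_\psi=|F|^{-1}\sum_{f\in F}\overline{\psi(f)}\lambda(f)$ is central in $\mathcal{L}(\Gamma)$ with trace $\tau(p_\psi)=|F|^{-1}$, while $p_\psi\,l^2(\Gamma)\cong l^2(\overline{\Gamma})$ realizes $L^2\bigl(\mathcal{L}(\overline{\Gamma},\omega_\psi)\bigr)$, of $\mathcal{L}(\overline{\Gamma},\omega_\psi)$-dimension one; linearity of the dimension then gives the stated ratio on every $\psi$-supported module. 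Writing $H_X=\int_{\widehat{Z}}^{\oplus}H_{X,\chi}\,d\chi$ with $H_{X,\chi}=\int_{X_\chi}^{\oplus}H_\pi\,d\nu_{\overline{G},\omega_\chi}(\pi)$, applying Proposition \ref{pvndimint} over $\widehat{Z}$, the scaling relation fiberwise, and then Theorem \ref{tdimmeas} on each fiber yields
\[
\dim_{\mathcal{L}(\Gamma)}H_X=\int_{\widehat{Z}}\frac{1}{|F|}\,\mu_{\overline{G}}(\overline{\Gamma}/\overline{G})\,\nu_{\overline{G},\omega_\chi}(X_\chi)\,d\chi.
\]

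It then remains to recognize the integral as $\nu_G(X)$. The final step is the disintegration of the Plancherel measure of $G$ over central characters, $\nu_G=\int_{\widehat{Z}}\nu_{\overline{G},\omega_\chi}\,d\chi$, compatible with $\mu_G=\mu_Z\cdot\mu_{\overline{G}}$ and with $d\chi$ the dual Haar measure on $\widehat{Z}$; granting this, the integral collapses to $|Z\cap\Gamma|^{-1}\,\mu_{\overline{G}}(\overline{\Gamma}/\overline{G})\,\nu_G(X)$, which is the claim, and the density form follows at once. I expect the main obstacle to be precisely this measure bookkeeping: establishing that the Plancherel measure of $G$ disintegrates into the projective Plancherel measures $\nu_{\overline{G},\omega_\chi}$ with the correct normalization (a ``Plancherel in stages'' statement for the central extension), together with verifying the measurability and finiteness hypotheses needed to apply Proposition \ref{pvndimint} over the non-finite base $\widehat{Z}$ (finiteness of the total being guaranteed by $\nu_G(X)<\infty$). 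By comparison, the identification $L^2(G)_\chi\cong L^2_{\omega_\chi}(\overline{G})$ and the unimodular type~I properties of $\overline{G}$ are routine.
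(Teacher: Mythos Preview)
Your proposal is correct and follows essentially the same route as the paper: slice over central characters, apply Theorem~\ref{tdimmeas} on each $(\overline{G},\overline{\Gamma},\omega_\chi)$-fiber, pass from $\mathcal{L}(\overline{\Gamma},\omega_\chi)$-dimension to $\mathcal{L}(\Gamma)$-dimension via the $1/|F|$ scaling for the finite central extension (the paper's Proposition~\ref{pquovdim}, which your $p_\psi$ argument reproves), and integrate using Proposition~\ref{pvndimint}. The two obstacles you flag are exactly what the paper supplies as auxiliary input: the ``Plancherel in stages'' disintegration $d\nu_G=d\nu_Z\,d\nu_{\overline{G},\overline{\omega_\gamma}}$ is Lemma~\ref{lcenPlanc} (quoting Kleppner--Lipsman), and the finiteness hypothesis for Proposition~\ref{pvndimint} over $\widehat{Z}$ is handled, as you anticipate, by the global bound $\nu_G(X)<\infty$.
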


We need a decomposition result of the ordinary Plancherel measure proved by Kleppner and Lipsman (see \cite[\S 8,10]{KlepLips1972}) for the proof of this theorem. 
We start with a general setting that $G$ is a locally compact unimodular type I group. 
Let $N$ be a central subgroup of $G$, i.e., $N\subset Z(G)$. 
We will apply the "Mackey machine" (see \cite{Mackey58} and \cite[\S 1]{Rsbg94}) to construct the irreducible representations of $G$ by the characters of $N$ and the projective irreducible representations of $G/N$. 
\begin{enumerate}
    \item For $\gamma\in \widehat{N}$, there is a projective representation $\gamma'$ of $G$ such that
    \begin{center}
        $\gamma'(gh)=\omega_{\gamma}(g,h)\gamma'(g)\gamma'(h)$
    \end{center}
    for a 2-cocycle $\omega_{\gamma}$ which is unique in $H^2(G/N,\mathbb{T})$. 
   It is known that $\gamma'$ extends $\gamma$: $\gamma'|_{N}=\gamma$ (see \cite[\S 1]{KlpLips1973}).
   \item Let $\sigma$ be a $\overline{\omega_{\gamma}}$-projective representation of $G/N$ and $\sigma'$ be the lift of $\sigma$ to $G$. 
   \item $\pi_{\gamma,\sigma}=\gamma'\otimes \sigma'$ is an ordinary irreducible representation of $G$. 
   It is known that each $\pi\in \widehat{G}$ is of such a form (see \cite[\S 1]{KlpLips1973}). 
\end{enumerate}
The Plancherel measure of $G$ can be described by the central extension of $N$ as follows. 
\begin{lemma}\label{lcenPlanc}
The left and right regular representations of $G$ can be decomposed as:
\begin{equation*}
\begin{aligned}
\lambda_G&=\int_{\widehat{N}}^{\oplus}\int_{\Pi(G/N,\overline{\omega_{\gamma}})}^{\oplus}\pi_{\gamma,\sigma}\otimes \id_{\pi_{\gamma,\sigma}^*}\dd\nu_{G/N,\overline{\omega_{\gamma}}}\dd \nu_{N}(\gamma),\\
\rho_G&=\int_{\widehat{N}}^{\oplus}\int_{\Pi(G/N,\overline{\omega_{\gamma}})}^{\oplus} \id_{\pi_{\gamma,\sigma}}\otimes \pi_{\gamma,\sigma}^{*} \dd\nu_{G/N,\overline{\omega_{\gamma}}}\dd \nu_{N}(\gamma)
\end{aligned}
\end{equation*}
where $\nu_{G/N,\overline{\omega_{\gamma}}}$ is the Plancherel measure on the $\overline{\omega_{\gamma}}$-projective dual $\Pi(G/N,\overline{\omega_{\gamma}})$. 
In particular, 
\begin{center}
    $d\nu_G(\pi_{\gamma,\sigma})=d\nu_{N}(\gamma)d\nu_{G/N,\overline{\omega_{\gamma}}}(\sigma)$. 
\end{center}
\end{lemma}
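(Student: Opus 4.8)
The plan is to obtain the stated decomposition by disintegrating $L^2(G)$ along the central subgroup $N$ and applying the twisted Plancherel theorem (Theorem \ref{tprojPthm}) to $G/N$ fibrewise over the central characters. Since $N\subset Z(G)$ is abelian, Fourier analysis on $N$ splits the regular representation into central-character isotypic pieces indexed by $\widehat{N}$:
\begin{equation*}
L^2(G)\cong\int_{\widehat{N}}^{\oplus}L^2(G)_{\gamma}\,d\nu_N(\gamma),
\end{equation*}
where $L^2(G)_\gamma$ is the space of (classes of) measurable $f\colon G\to\mathbb{C}$ with $f(gn)=\gamma(n)f(g)$ for $n\in N$ and $\int_{G/N}|f|^2<\infty$, and the Haar measures on $G$, $N$, $G/N$ are normalized so that Weil's formula $\int_G=\int_{G/N}\int_N$ holds and $\nu_N$ is the dual measure on $\widehat N$. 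Because $N$ is central, each $L^2(G)_\gamma$ is invariant under both $\lambda_G$ and $\rho_G$, so it suffices to identify the $G$-action on a single fibre.

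Next I would trivialize the line bundle underlying $L^2(G)_\gamma$. Fixing a Borel section $s\colon G/N\to G$, restriction $f\mapsto f\circ s$ gives a unitary $L^2(G)_\gamma\cong L^2(G/N)$. Under this identification the left regular action of $G$ on $L^2(G)_\gamma$ descends to a \emph{projective} left regular representation of $G/N$ whose cocycle is $\overline{\omega_\gamma}$, the conjugate of the Mackey-machine cocycle $\omega_\gamma$ recorded just before the lemma; it arises from the relation $s(\dot g_1)s(\dot g_2)=s(\dot g_1\dot g_2)\,n(\dot g_1,\dot g_2)$ together with $\gamma$ evaluated on the $N$-valued obstruction $n$. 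Concretely, multiplication by the projective extension $\gamma'$ intertwines $\lambda_G|_{L^2(G)_\gamma}$ with $\lambda_{\overline{\omega_\gamma}}$ and $\rho_G|_{L^2(G)_\gamma}$ with $\rho_{\overline{\omega_\gamma}}$ on $L^2(G/N)$.

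Now I apply Theorem \ref{tprojPthm} to the unimodular type I group $G/N$ with cocycle $\overline{\omega_\gamma}$. It decomposes $L^2(G/N)$, with its twisted regular representations, as
\begin{equation*}
L^2(G/N)\cong\int_{\Pi(G/N,\overline{\omega_\gamma})}^{\oplus}H_\sigma\otimes H_\sigma^{*}\,d\nu_{G/N,\overline{\omega_\gamma}}(\sigma),
\end{equation*}
intertwining $\lambda_{\overline{\omega_\gamma}}$ with $\int^{\oplus}\sigma\otimes\id$ and $\rho_{\overline{\omega_\gamma}}$ with $\int^{\oplus}\id\otimes\overline\sigma$. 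Transporting back through $\gamma'$ sends $\sigma$ to $\pi_{\gamma,\sigma}=\gamma'\otimes\sigma'$, so on the fibre $L^2(G)_\gamma$ the actions become $\int^{\oplus}\pi_{\gamma,\sigma}\otimes\id$ for $\lambda_G$ and $\int^{\oplus}\id\otimes\,\pi_{\gamma,\sigma}^{*}$ for $\rho_G$ (using $\overline\sigma\cong\sigma^{*}$). Integrating over $\gamma\in\widehat{N}$ and assembling the iterated direct integral yields the two displayed formulas. Finally, comparing with the canonical Plancherel decomposition $\lambda_G=\int_{\widehat{G}}^{\oplus}\pi\otimes\id\,d\nu_G(\pi)$ and invoking the essential uniqueness of the central decomposition of the regular representation for type I groups, the image of $d\nu_N(\gamma)\,d\nu_{G/N,\overline{\omega_\gamma}}(\sigma)$ under $(\gamma,\sigma)\mapsto\pi_{\gamma,\sigma}$ must coincide with $\nu_G$; this is exactly the asserted factorization $d\nu_G(\pi_{\gamma,\sigma})=d\nu_N(\gamma)\,d\nu_{G/N,\overline{\omega_\gamma}}(\sigma)$.

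I expect the main obstacle to be the bookkeeping in the second step: verifying that the cocycle produced by the Borel section is precisely $\overline{\omega_\gamma}$ (up to coboundary, and with the correct conjugation so that $\pi_{\gamma,\sigma}=\gamma'\otimes\sigma'$ is genuinely an \emph{ordinary} representation), and that the unitary $L^2(G)_\gamma\cong L^2(G/N)$ is $G$-equivariant for the claimed twisted action under the Haar normalizations dictated by Weil's formula. A secondary technical point is measurability: one must check that $\gamma\mapsto\omega_\gamma$, the fields $\{H_\sigma\}$, and the Plancherel measures $\nu_{G/N,\overline{\omega_\gamma}}$ vary measurably in $\gamma$, so that the iterated direct integral over $\widehat{N}$ and $\Pi(G/N,\overline{\omega_\gamma})$ reassembles into a single direct integral over $\widehat{G}$ compatible with its standard Borel structure.
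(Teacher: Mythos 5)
Your proposal is correct in substance, but it takes a genuinely different route from the paper: the paper proves this lemma in one line, by citing Kleppner--Lipsman \cite[Theorem 10.2]{KlepLips1972} specialized to a central subgroup $N\subset Z(G)$ (centrality makes the $G$-action on $\widehat{N}$ trivial, so every stabilizer is all of $G$ and the orbit-space integration in their theorem collapses to an integral over $\widehat{N}$ itself). What you do instead is reprove that special case from Theorem \ref{tprojPthm}: disintegrate $L^2(G)$ over $\widehat{N}$ into $\gamma$-covariant fibres, trivialize each fibre to $L^2(G/N)$ carrying the $\overline{\omega_{\gamma}}$-twisted regular representations, apply the twisted Plancherel theorem fibrewise, and reassemble. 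This buys self-containedness, at the cost of exactly the two caveats you flag, which constitute the real technical content of Kleppner--Lipsman's treatment: the measurable-in-$\gamma$ choice of the cocycles $\omega_{\gamma}$ and of the fields of representations and Plancherel measures, and the cocycle bookkeeping. On the latter, note that your sensitivity is warranted: with the paper's stated convention $\gamma'(gh)=\omega_{\gamma}(g,h)\gamma'(g)\gamma'(h)$ and $\sigma$ also $\overline{\omega_{\gamma}}$-projective, the tensor $\gamma'\otimes\sigma'$ would carry the cocycle $\overline{\omega_{\gamma}}^{\,2}$ rather than be ordinary; one of the two factors must carry the conjugate cocycle, so the conventions need to be fixed coherently before your fibrewise identification can be checked.

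One refinement to your final step: \emph{essential uniqueness of the central decomposition} determines $\nu_G$ only up to measure equivalence, and where $\dim H_{\pi}=\infty$ the unitary class of $\int^{\oplus}\pi\otimes\id\,\dd\nu$ is insensitive to rescaling $\nu$, so that uniqueness statement cannot by itself yield the exact identity $d\nu_G(\pi_{\gamma,\sigma})=d\nu_{N}(\gamma)\,d\nu_{G/N,\overline{\omega_{\gamma}}}(\sigma)$ as an equality of measures. What pins the density is the trace normalization: each of your three steps (Fourier transform on $N$ with the dual measure, the unitary fibre trivialization, the twisted Plancherel isometry) is isometric, so the composite satisfies the trace identity of Theorem \ref{tprojPthm}(2), and it is the uniqueness clause of the Plancherel theorem \emph{for the trace} that then forces equality of measures, not merely of measure classes. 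With that substitution, your outline is a complete and legitimate alternative to the paper's citation.
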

\begin{proof}
It follows \cite[Theorem 10.2]{KlepLips1972} for the special case $N\subset Z(G)$. 
\end{proof}

\begin{proposition}\label{pquovdim}
Let $\Gamma$ be a countable discrete group and $K$ be a finite normal subgroup of $\Gamma$. 
Let $\omega\in H^2(\Gamma/K,\mathbb{T})$ and $H$ be a module over $\mathcal{L}(\Gamma/K,\omega)$. 
Then $H$ is a module over $\mathcal{L}(\Gamma,\omega)$ such that
\begin{center}
    $\dim_{\mathcal{L}(\Gamma,\omega)}H=\frac{1}{|K|}\dim_{\mathcal{L}(\Gamma/K,\omega)}H$,
\end{center}
where $\mathcal{L}(\Gamma,\omega)$ is the twisted group von Neumann algebra associated with the lifting of the 2-cocycle of $\omega$ to $H^2(\Gamma,\mathbb{T})$. 
\end{proposition}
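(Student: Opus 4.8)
The plan is to realize the quotient $\Gamma\to\Gamma/K$ at the level of von Neumann algebras by a single \emph{central} projection in $M:=\mathcal{L}(\Gamma,\omega)$, and then to read off the dimension identity from the way $\dim$ behaves under cutting by a central projection. Throughout, the $2$-cocycle on $\Gamma$ is the inflation of $\omega$ along the quotient map, so that $\omega(k,k')=1$ for all $k,k'\in K$. Consequently $\lambda_{\omega}|_{K}$ is a genuine (untwisted) unitary representation of $K$ on $l^2(\Gamma)$, and I would set $p:=\frac{1}{|K|}\sum_{k\in K}\lambda_{\omega}(k)\in M$. Using $\omega(k,k')=1$ and $\lambda_{\omega}(k)^{*}=\lambda_{\omega}(k^{-1})$ one checks $p^{*}=p=p^{2}$, and from $\tau(\lambda_{\omega}(\gamma))=\delta_{e}(\gamma)$ one gets $\tau(p)=\frac{1}{|K|}$. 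The key structural point is that $p$ is central: conjugating $\lambda_{\omega}(k)$ by $\lambda_{\omega}(\gamma)$ yields $\lambda_{\omega}(\gamma k\gamma^{-1})$ times the scalar $\omega(\gamma,k)\,\overline{\omega(\gamma,\gamma^{-1})}\,\omega(\gamma k,\gamma^{-1})$, which collapses to $1$ precisely because $\omega$ is inflated from $\Gamma/K$; since $K\trianglelefteq\Gamma$, summing over $k\in K$ permutes $K$ and leaves $p$ fixed, so $\lambda_{\omega}(\gamma)\,p\,\lambda_{\omega}(\gamma)^{*}=p$ for every $\gamma$.

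Because $p$ is central, $pMp=pM=Mp$ is a von Neumann algebra with unit $p$, and I would identify it with $N:=\mathcal{L}(\Gamma/K,\omega)$. The elements $V_{\overline{\gamma}}:=\lambda_{\omega}(\gamma)p$ depend only on the coset $\overline{\gamma}=\gamma K$ (since $\lambda_{\omega}(k)p=p$), generate $pMp$, and satisfy the $\omega$-relation $V_{\overline{\gamma}}V_{\overline{\gamma'}}=\omega(\overline{\gamma},\overline{\gamma'})\,V_{\overline{\gamma\gamma'}}$. A short computation with the normalized tracial state $\tau_{pMp}:=\tau(p)^{-1}\tau|_{pMp}$ gives $\tau_{pMp}(V_{\overline{\gamma}})=\delta_{\overline{e}}(\overline{\gamma})$, which is exactly the value of the canonical trace of $N$ on the twisted regular representation. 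Hence $\overline{\gamma}\mapsto V_{\overline{\gamma}}$ extends to a trace-preserving $*$-isomorphism $N\cong(pMp,\tau_{pMp})$. Centrality of $p$ further provides a normal unital (onto $p$) $*$-homomorphism $M\to pMp$, $x\mapsto xp=pxp$; pulling the given $N$-action back along it equips the \emph{same} Hilbert space $H$ with an $M$-module structure in which $\lambda_{\omega}(\gamma)$ acts as $\lambda_{\omega}(\overline{\gamma})$ does. This is the module structure asserted in the statement.

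For the dimension identity I would use the commutant description recalled just before Proposition \ref{pvndimint}: for a module over a tracial algebra $A$ one has $\dim_{A}H=\Tr_{A'}(q)$, where $q$ is the projection representing $H$ inside the amplified standard module and the canonical commutant trace is normalized by $\tr_{A'}(JxJ)=\tau_{A}(x)$. Since the $M$-action on $H$ factors through $pMp$ and $pL^{2}(M)\cong L^{2}(pMp,\tau_{pMp})$ (the cyclic vector being $\sqrt{|K|}\,p\delta_{e}$), the module $H$ embeds into $pL^{2}(M)\otimes l^2(\mathbb{N})\cong L^{2}(pMp)\otimes l^2(\mathbb{N})$ by one and the same projection $q$ for both the $M$- and the $pMp$-action, and the two commutants coincide as algebras of operators. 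The only difference is the normalization of the canonical trace on this common commutant: as $\tau_{pMp}=\tau(p)^{-1}\tau|_{pMp}$, the dual traces satisfy $\Tr_{(pMp)'}=\tau(p)^{-1}\Tr_{M'}$, whence $\dim_{M}H=\tau(p)\,\dim_{pMp}H=\frac{1}{|K|}\dim_{N}H$.

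I expect the last comparison to be the main obstacle, namely making rigorous that passing from the $M$- to the $pMp$-module structure changes nothing but the normalization of the commutant trace, so that $\dim$ rescales by the single factor $\tau(p)=1/|K|$. The cleanest safeguard is to verify this scaling on the generator $pL^{2}(M)$, where directly $\dim_{M}(pL^{2}(M))=\tau(p)=\tfrac{1}{|K|}$ while $\dim_{pMp}(pL^{2}(M))=1$, and then to propagate it to arbitrary $H$ using the additivity and normality of $\dim$ together with Lemma \ref{ltrdim} (and, if one prefers to argue through amplifications, the integral/summation formula of Proposition \ref{pvndimint}).
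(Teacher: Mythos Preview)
Your argument is correct and essentially complete; the only soft spot is the last paragraph, but your suggested check on the generator $pL^{2}(M)$ (where $\dim_{M}=\tau(p)$ and $\dim_{pMp}=1$) together with the classification of modules in Lemma~\ref{ltrdim} does close it.

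Your route is genuinely different from the paper's. The paper does not introduce the central projection $p=\frac{1}{|K|}\sum_{k\in K}\lambda_{\omega}(k)$ or the corner identification $pMp\cong\mathcal{L}(\Gamma/K,\omega)$; instead it writes down an explicit $(\Gamma,\omega)$-equivariant isometry
\[
T\colon l^{2}(\Gamma/K)\to l^{2}(\Gamma),\qquad T(\delta_{g_jK})=\tfrac{1}{\sqrt{|K|}}\sum_{i}\delta_{g_jk_i},
\]
computes $\dim_{\mathcal{L}(\Gamma,\omega)}l^{2}(\Gamma/K)=\tr(TT^{*})=\tfrac{1}{|K|}$ directly, and then passes to a general $H$ by writing $H\cong l^{2}(\Gamma/K)^{\oplus n}\oplus l^{2}(\Gamma/K)q$ and invoking \cite[Proposition~3.2.5(e)]{GHJ} for the piece $l^{2}(\Gamma/K)q$. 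The two approaches are secretly the same object viewed from opposite sides: the image of $T$ is exactly $pL^{2}(M)$, and $TT^{*}$ is right multiplication by $p$, i.e.\ $JpJ$. What your approach buys is a clean structural explanation---$\mathcal{L}(\Gamma/K,\omega)$ sits inside $\mathcal{L}(\Gamma,\omega)$ as a central corner, so the dimension rescales by $\tau(p)$---and it avoids the external citation. What the paper's approach buys is concreteness: one never needs to discuss commutant traces under compression, only the single computation $\tr(TT^{*})=\tfrac{1}{|K|}$ and a standard module decomposition.
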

\begin{proof}
Assume $K=\{k_{i}\}_{1\leq i\leq m}$. 
Take $\{g_j\}_{j\geq 1}$ as a family of representatives for the coset $\Gamma/ K$. 
Then $\{\delta_{g_{j}K}\}_{j\geq 1}$ form a basis of $l^2(\Gamma/ K)$ and $\{\delta_{g_{j}k_{i}}\}_{j\geq 1,1\leq i\leq m}$ form a basis of $l^2(\Gamma)$. 
Consider the linear map $T\colon l^2(\Gamma/K)\to l^2(\Gamma)$ given by
\begin{center}
$T(\delta_{g_{j}K})=\frac{1}{\sqrt{|K|}}\sum_{1\leq i\leq m}\delta_{g_{j}k_{i}}$.
\end{center}
We can check that $T$ gives a $(\Gamma,\omega)$-equivariant isometry if $l^{2}(\Gamma/K)$ is equipped with the $(\Gamma,\omega)$-action which passes from the $\Gamma$-action to this quotient.  

Let $\tr(x)=\langle x\delta_{e} ,\delta_{e}\rangle$ denote the canonical tracial state on $\mathcal{L}(\Gamma,\omega)'\cap B(l^2(\Gamma))=\mathcal{R}(\Gamma,\overline{\omega})$. 
Thus we have
\begin{center}
    $\dim_{\mathcal{L}(\Gamma,\omega)}l^2(\Gamma/K)=\tr(TT^*)=\langle TT^* x\delta_{e} ,\delta_{e}\rangle=\frac{1}{|K|}$. 
\end{center}
Assume $H$ is a module over $\mathcal{L}(\Gamma/K,\omega)$ such that $\dim_{\mathcal{L}(\Gamma/K,\omega)}H=n+\alpha$ with $n\in \mathbb{N}$ and $0\leq \alpha<1$. 
We know that, as modules over $\mathcal{L}(\Gamma/K,\omega)$ and $\mathcal{L}(\Gamma,\omega)$,
\begin{center}
    $H\cong l^2(\Gamma/K)^{\oplus n}\oplus l^2(\Gamma/K)p$, 
\end{center}
for some $p\in \mathcal{R}(\Gamma/K,\overline{\omega})$ such that $\tr(p)=\alpha$. 
By \cite[Proposition 3.2.5(e)]{GHJ}, we have 
\begin{equation*}
    \dim_{\mathcal{L}(\Gamma,\omega)}l^2(\Gamma/K)p=\tr(p)\dim_{\mathcal{L}(\Gamma,\omega)}l^2(\Gamma/K)=\frac{\alpha}{|K|}.
\end{equation*}
Thus $\dim_{\mathcal{L}(\Gamma,\omega)}H=\frac{n+\alpha}{|K|}=\frac{1}{|K|}\dim_{\mathcal{L}(\Gamma/K,\omega)}H$. 
\end{proof}

Now we can prove the main theorem. 

\begin{proof}[Proof of Theorem \ref{tASred}]
We know that $\overline{G}=G/Z$ is a semi-simple real group and thus $\overline{\Gamma}=\overline{G}(\mathbb{Z})$ is a lattice of $\overline{G}$: $\mu_{\overline{G}}(\overline{\Gamma}/\overline{G})<\infty$. 
Moreover, $\mathbf{Z}(\mathbb{R})^0$  (the connected component) is a central torus such that $[\mathbf{Z}(\mathbb{R}):\mathbf{Z}(\mathbb{R})^0]$ is finite. 
Thus $\mathbf{Z}(\mathbb{R})^0\cong (\mathbb{R}^{\times})^k$ for some $k\in \mathbb{N}$ and $\mathbf{Z}(\mathbb{Z})^0\cong (\mathbb{Z}^{\times})^k$, which is finite. 
Hence $Z\cap \Gamma=\mathbf{Z}(\mathbb{Z})$ is a finite group. 

For each $\gamma\in \widehat{Z}$, we take $Y_{\gamma}\subset \Pi(\overline{G},\overline{\omega_{\gamma}})$ such that $\nu_{\overline{G},\overline{\omega_{\gamma}}}(Y_{\gamma})<\infty$. 
We let $H_{Y_{\gamma}}=\int_{Y_{\gamma}}^{\oplus}\sigma d\nu_{\overline{G},\overline{\omega_{\gamma}}}(\sigma)$. 
By Theorem \ref{tdimmeas}, $\dim_{\mathcal{L}(\overline{\Gamma},\overline{\omega_{\gamma}})}H_{Y_{\gamma}}=\mu_{\overline{G}}(\overline{\Gamma}/\overline{G})\cdot \nu_{\overline{G},\overline{\omega_{\gamma}}}(Y_{\gamma})$. 
By Proposition \ref{pquovdim}, we have
\begin{equation*}
    \dim_{\mathcal{L}(\Gamma,\overline{\omega_{\gamma}})}H_{Y_{\gamma}}=\frac{1}{|Z\cap \Gamma|}\mu_{\overline{G}}(\overline{\Gamma}/\overline{G})\cdot \nu_{\overline{G},\overline{\omega_{\gamma}}}(Y_{\gamma}),
\end{equation*}
where $\overline{\omega_{\gamma}}$ also denotes its lift from $\overline{\Gamma}$ to $\Gamma$. 

Consider the space $\gamma\otimes H_{Y_{\gamma}}$, which is $\gamma\otimes\int_{{Y_{\gamma}}}^{\oplus}\sigma d\nu_{\overline{G},\overline{\omega_{\gamma}}}(\sigma)=\int_{{Y_{\gamma}}}^{\oplus}\gamma\otimes\sigma d\nu_{\overline{G},\overline{\omega_{\gamma}}}(\sigma)$. 
As $\gamma$ is a $\omega$-projective representation of $G$, $\gamma\otimes\sigma$ is an ordinary representation of $G$ and also of $\Gamma$. 
Thus, by tensoring the $\omega_{\gamma}$-projective character $\gamma$ of $Z$,  $\gamma\otimes H_{Y_{\gamma}}$ comes to be a module over $\mathcal{L}(\Gamma)$, whose von Neumann dimension is given as
\begin{equation*}
  \dim_{\mathcal{L}(\Gamma)}(\gamma\otimes H_{Y_{\gamma}})=\frac{1}{|Z\cap \Gamma|}\mu_{\overline{G}}(\overline{\Gamma}/\overline{G})\cdot \nu_{\overline{G},\overline{\omega_{\gamma}}}(Y_{\gamma}). 
\end{equation*}
Let $W$ be a $\nu_{Z}$-measurable subset of $\widehat{Z}$ such that $\nu_{Z}(W)$ is finite. 
By Proposition \ref{pvndimint}, we have
\begin{equation}\label{erealred1}
\begin{aligned}
    \dim_{\mathcal{L}(\Gamma)}\left(\int_{W}\gamma\otimes H_{Y_{\gamma}}d\nu_{Z}(\gamma)\right)&=\int_{W}\dim_{\mathcal{L}(\Gamma)}(\gamma\otimes H_{Y_{\gamma}})d\nu_{Z}(\gamma)\\
    &=\frac{1}{|Z\cap \Gamma|}\mu_{\overline{G}}(\overline{\Gamma}/\overline{G})\cdot\int_{W} \nu_{\overline{G},\overline{\omega_{\gamma}}}(Y_{\gamma})d\nu_{Z}(\gamma).
\end{aligned} 
\end{equation}
For a measurable $X\subset \widehat{G}$ and $\gamma\in \widehat{Z}$, we let $X_{\gamma}$ be the $\gamma$-slice of $X$, i.e. 
\begin{center}
    $X_{\gamma}=\{\sigma\in \Pi(\overline{G},\overline{\omega_{\gamma}})|\pi_{\gamma,\sigma}\in X\}$. 
\end{center} 
By $d\nu_G(\pi_{\gamma,\sigma})=d\nu_{N}(\gamma)d\nu_{G/N,\overline{\omega_{\gamma}}}(\sigma)$ (see Lemma \ref{lcenPlanc}) and Equation \ref{erealred1}, we obtain
\begin{equation*}
\begin{aligned}
    \dim_{\mathcal{L}(\Gamma)}\left(\int_{X}^{\oplus}\pi d\nu_{G}(\pi)\right)&=\dim_{\mathcal{L}(\Gamma)}\left(\int_{X}^{\oplus}\pi_{\gamma,\sigma} d\nu_{G}(\pi_{\gamma,\sigma})\right)\\
    &=\dim_{\mathcal{L}(\Gamma)}\left(\int_{\widehat{Z}}^{\oplus}\gamma\otimes\left( \int_{X_{\gamma}}^{\oplus}\sigma d\nu_{G/Z,\overline{\omega_{\gamma}}}(\sigma)\right)d\nu_{Z}(\gamma)\right)\\
    &=\dim_{\mathcal{L}(\Gamma)}\left(\int_{\widehat{Z}}^{\oplus}\gamma\otimes H_{X_{\gamma}}d\nu_{Z}(\gamma)\right)\\
    &=\frac{1}{|Z\cap \Gamma|}\mu_{\overline{G}}(\overline{\Gamma}/\overline{G})\cdot\int_{\widehat{Z}} \nu_{\overline{G},\overline{\omega_{\gamma}}}(X_{\gamma})d\nu_{Z}(\gamma)\\
    &=\frac{1}{|Z\cap \Gamma|}\mu_{\overline{G}}(\overline{\Gamma}/\overline{G})\cdot\nu_{G}(X).
\end{aligned} 
\end{equation*}
\end{proof}

\begin{remark}
For the $S$-arithmetic subgroups in reductive groups, 
we sometimes should apply Theorem \ref{tdimmeas} to the adjoint group $G/Z(G)$ with its projective representations instead of Theorem \ref{tASred} for $G$ itself. 

Let $F$ be a number field and $\mathcal{O}$ be the integral ring of $F$. Let $F_v$ denote the local field at a place $v$ and $V_{\infty}$ be the set of infinite places of $F$. 
Then $G(\mathcal{O})$ is an arithmetic subgroup of $G_{\infty}=\prod_{v\in V_{\infty}}G(F_v)$. 
By Dirichlet's unit Theorem (see \cite[Theorem 7.4]{NeuANT}), the unit group of $\mathcal{O}$ is an abelian group with free rank $r+s-1$ where $r,2s$ denotes the number of real and complex embeddings of $F$ such that $[F:\mathbb{Q}]=r+2s$. 
In this case, $Z(\mathcal{O})$ may not be finite. 
Theorem \ref{tASred} only applies to the pair $G(\mathcal{O}_F)\subset G_{\infty}$ when $F$ is $\mathbb{Q}$ or an imaginary quadratic field.

For a finite set $S$ of places such that $S$ contains $V_{\infty}$,  
let $\mathcal{O}_S$ be the ring of $S$-integers. 
For the $S$-arithmetic group $G(\mathcal{O}_S)$ in $G_S=\prod_{v\in S}G(F_v)$, 
$Z(\mathcal{O}_S)$ has a free part if $S$ contains a finite place (see \cite[Theorem 5.12]{PlaVla94}). 
Thus Theorem \ref{tASred} does not apply to this case. 
\end{remark}

\bibliographystyle{abbrv}
\typeout{}
\bibliography{MyLibrary} 

\textit{E-mail address}: \href{mailto:junyang@fas.harvard.edu}{junyang@fas.harvard.edu}

\end{document}